\documentclass[11pt]{article} 
\usepackage[english]{babel}

\usepackage{appendix}
\usepackage{xcolor}
\usepackage{geometry}
\usepackage{amsthm,amsmath,amssymb}
\usepackage{algorithm}
\usepackage{algorithmic}
\usepackage{hyperref}
\usepackage{bbm}
\usepackage{mathrsfs}
\usepackage{graphicx}  
\usepackage{epstopdf}
\usepackage{float}
\usepackage{comment}
\usepackage{url}
\usepackage{tikz}
\usepackage{caption}
\usepackage{graphicx} 
\usepackage{float} 
\usepackage{subfigure}
\usepackage{booktabs}
\usepackage{indentfirst}
\usepackage{authblk}

\newtheorem{theorem}{Theorem}[section]
\newtheorem{lemma}{Lemma}[section]

\newtheorem{corollary}{Corollary}[section]

\numberwithin{figure}{section}
\numberwithin{algorithm}{section}

\usepackage{xpatch}
\makeatletter
\AtBeginDocument{\xpatchcmd{\@thm}{\thm@headpunct{.}}{\thm@headpunct{}}{}{}}
\makeatother

\title{\vspace{-0.5in} \bf   Maximizing Throughput in an M/G/1 Queue with Customer Abandonments  \vspace{10pt}}
\author{{\normalsize Runhua Wu and  Hayriye Ayhan} \\ \small H. Milton Stewart School of Industrial and Systems Engineering\\
\small Georgia Institute of Technology\\
\small Atlanta, GA 30332-0205, U.S.A. \vspace{10pt}\\ 
{\normalsize Douglas G. Down} \\
\small Department of Computing and Software\\ \small  McMaster University
\\
\small Hamilton, Ontario, Canada}

\begin{document}
\maketitle

\begin{abstract}
This paper studies the problem of identifying the optimal server assignment policy in single-server queues with customer abandonment. We consider a system with Poisson arrivals and exponentially distributed patience times. We show that when service times follow either an Erlang-$K$ or a hyperexponential distribution and the decision maker can observe the phase of a customer's service time, the Shortest Remaining Expected Processing Time (SREPT) policy maximizes the long-run average throughput, independent of the abandonment rate.
\end{abstract}
\noindent {\bf Keywords:} Continuous Time Markov Decision Processes; Queue with Abandonments; Server Scheduling
\setlength{\parindent}{2em}
\section{Introduction}

Queueing systems with customer abandonments arise in many real-world service environments such as call centers, emergency departments, healthcare systems, and perishable inventory models. In these systems, customers may abandon if their waiting time exceeds their patience level. Despite this wide range of applications, there are relatively few papers that focus on the exact analysis and control of queues with abandonments. Our objective in this paper is to address this issue by studying the problem of server scheduling in queues with abandonments.

Server scheduling to minimize mean response time in a preemptive queue is a classic problem in queueing theory. When there are no customer abandonments and service times are known, Schrage \cite{Schrage} showed that Shortest-Remaining-Processing-Time (SRPT) minimizes the number of customers in the system sample-path-wise among all anticipating policies in queues with general service time distribution and arrival process. Grosof et al.\ \cite{izzy}  showed that SRPT is also asymptotically optimal for multiserver queues under heavy load. However, determining or estimating a customer’s exact remaining processing time is difficult or impossible in many applications, in which case SRPT cannot be implemented.

For non-anticipating service disciplines where service times are unknown but the service time distribution is known, Righter et al.\ \cite{Righter} showed that the FCFS (First Come First Served) service discipline minimizes the mean waiting time in queues with NBUE (New-Better-than-Used-in-Expectation) service time distribution. Gittins \cite{GIP} applied the optimal solution of a multi-armed bandit problem to a Markovian queueing system and obtained the Gittins Index Policy that minimizes the mean waiting time among all non-anticipating preemptive policies in any $M/G/1$ queueing system. Gittins defined a rank function that maps a customer’s age (the amount of service that it has received), to a rank that denotes its priority. However, hidden in its simple framework is a major obstacle: computing the rank function from the service time distribution requires solving a nonconvex optimization problem for every possible age. Although the optimization can be simplified for specific classes of service time distributions (see Aalto et al.\ \cite{Gittens}), it is intractable in general. As a result, it is natural to ask how another non-anticipating scheduling policy, Shortest-Remaining-Expected-Processing-Time (SREPT), performs in queueing systems. Scully et al.\ \cite{Scully} showed that the SREPT policy is near optimal  and  computationally more tractable than the Gittins Index Policy in an $M/G/1$ queue. They also derived a monotonic SREPT policy, which is the only non-Gittins scheduling policy known to have a constant-factor approximation ratio for mean response time. The interested reader is referred to the references in Scully et al.\ \cite{Scully} for other common server scheduling policies that have been considered in $M/G/1$ queues. 

None of the papers discussed above have considered scheduling in the presence of customer abandonments. When abandonments are added to the model, long-run average throughput becomes a nontrivial performance metric (as it can depend on the scheduling policy). One could try to approach the problem using sample-path techniques as in \cite{Righter,Schrage}, who use interchange arguments to show that at every point in time, a particular policy (SRPT in \cite{Schrage}, FCFS in \cite{Righter}) maximizes the average throughput. (The results in those papers are actually stronger, but results for average throughput would suffice for our purposes.) Unfortunately, with customer abandonments such interchange arguments no longer appear to be possible. It is then natural to consider other approaches.

Many recent papers consider approximation methods or asymptotic analysis for server scheduling in queues with multiple customer classes and customer abandonments. Aalto \cite{Aalto1} used the Whittle index approach for a relaxed multi-server scheduling problem with impatient customers and Decreasing-Hazard-Rate (DHR) service times, where customers bring different rewards and holding costs to the system. In the relaxed version of the single-server problem, he shows that the Least Attained Service policy maximizes the long-run average throughput, independent of the abandonment rate. Atar et al.\ \cite{Atar} showed that  $c\mu/\theta$-rule asymptotically maximizes the long-run average gain in many server queues with Poisson arrivals and customer abandonments. Larrañaga et al.\ \cite{Lar} approximated the same stochastic system using a deterministic fluid model and derived a heuristic policy that combines the $c\mu$ and $c\mu/\theta$ rules. Ayesta et al.\ \cite{Ayesta} studied a clearing system and obtained priority rules resembling the $c\mu/\theta$-rule by applying a Markov Decision Process (MDP) formulation and analytically solving cases with one or two customer classes.  Dong and Ibrahim \cite{jing} proved that in systems with multiple servers, a single customer class, and  customer abandonments,  SRPT discipline asymptotically (in the many-server overloaded regime) maximizes the system throughput, among all scheduling disciplines. 

Only a limited number of studies examine exact optimal server scheduling policies in queues with customer abandonments.  Down et al. \cite{Down} considered a two-competing-queues reward system where the two classes have equal service rates. They showed that if one customer class has both a larger abandonment rate and a higher reward, then prioritizing this customer class is optimal.  Bhulai et al.\ \cite{sunjay} generalized these results to $K$ competing queues with customer abandonments and identified conditions guaranteeing the optimality of a strict priority rule. Cao \cite{Cao} considered a similar model with multiple servers and exponential service-time distributions and derived sufficient conditions under which a list policy is optimal under both non-preemptive and preemptive assumptions. Chen et al.\ \cite{chen2} considered a clearing system with exponentially distributed service times and customer abandonments and provided necessary and sufficient conditions for the optimal non-preemptive policy to be an index policy.

We consider a single-stage queueing system to which customers arrive according to a Poisson process with rate $\lambda>0$. At any time, the server may either remain idle or choose any customer in the system to serve. The system has infinite buffer space, and each customer in the system (including the one in service) may abandon the system after an exponentially distributed amount of time with rate $\theta>0$. We consider two types of service-time distributions. In the first case, all customers have an Erlang-$K$ service-time distribution with rate $\mu>0$ in each phase. In the second case, customers have  hyper-exponential service-time distribution with rates $\mu_1,\dots,\mu_K>0$, and probabilities $p_1,\dots,p_K$ where $\sum_{k=1}^Kp_k=1$ and the phase of each customer in the system is observable. For both cases, our objective is to find the server assignment  policy that maximizes the long-run average throughput  among all non-anticipating preemptive scheduling policies.

To the best of our knowledge, this is the first paper that characterizes the exact optimal server assignment policy in a class of $M/G/1$ queues with customer abandonments. More precisely, we show that, independent of the abandonment rate $\theta$, the SREPT policy maximizes the long-run average throughput of queues with Poisson arrivals, Erlang or hyperexponential service time distributions, and exponentially distributed patience time. Though this result may not be surprising, its proof (especially when the service times have Erlang distribution) is difficult. To this end, we model the system as a Continuous-Time Markov Decision Process (CTMDP). Because the underlying problem is not uniformizable, we analyze it directly in continuous time and show that the bias under the SREPT policy satisfies the optimality equation. More precisely, we derive Poisson equations for the first- and second-order differences of the bias function across states in the recurrent class and prove directly—without induction—that all differences are nonnegative. Although standard in functional analysis, this approach is seldom employed in MDP theory, yet it proves especially effective for CTMDPs with infinite state spaces and unbounded transition rates.


\section{Systems With Erlang Service Times} \label{S:Erlang}
We first focus on the case where the service time distribution is Erlang-$K$  with rate $\mu$ in each phase.
\subsection{Problem Formulation}
Consider the stochastic process $\{X^\pi(t):t\geq0\}$ under a policy $\pi\in\Pi$, where $\Pi$ denotes the set of all admissible scheduling policies and $X^\pi(t)=(x_1,x_2,\dots,x_K)\in S=\mathbb{N}^K$ with $x_i$ representing the total number of customers in the system with $i$ remaining phase(s) at time $t$ under policy $\pi$. Let $A(s)$ be the set of possible actions in state $s\in S$. Then for $s=(x_1,x_2,\dots,x_K)$, $A(s)=\{i:x_i>0\}\cup\{0\}$ where action $0$ stands for idling the server. Let $r(s,a)=\mu\mathbbm{1}_{a=1}$ be the reward earned per unit time  and $q_{s_is_j}(a)$ be the transition rate of going from state $s_i$ to state $s_j$ for $s_i\neq s_j$ when action $a$ is chosen in state $s_i$. Then, the following happens in the time interval $[t,t+\mathrm{d}t]$:\par
(i) the system receives an infinitesimal reward $r(s_i,a)\mathrm{d}t$, and \par
(ii) a transition from state $s_i$ to state $s_j$ (with $s_i\neq s_j$) occurs with probability $q_{s_is_j}(a)\mathrm{d}t+o(\mathrm{d}t)$; or the system remains in state $s_i$ with probability $1+q_{s_i}(a)\mathrm{d}t+o(\mathrm{d}t)$,
where $q_{s_i}(a):=-\sum_{s_j\neq s_i} q_{s_is_j}(a)$.\par

The rate matrix of the continuous time Markov chain when action $a\in\mathcal{A}$ is chosen in state $s_i=(x_1,x_2,\dots,x_K)$ is

\[q_{s_is_j}(a)=\left\{
\begin{array}{rcl}
    & \lambda & \mbox{for } s_j=(x_1,x_2,\dots,x_K+1),\\
   & \mu\mathbbm{1}_{a=1}+x_1\theta & \mbox{for } x_1>0,\ s_j=(x_1-1,x_2,\dots,x_K),\\
   & \mu\mathbbm{1}_{a=k} & \mbox{for }k>1,\ x_k>1,\ s_j=(x_1,x_2,\dots,x_{k-1}+1,x_k-1,\dots,x_K),\\
   & x_k\theta & \mbox{for }k>1,\ s_j=(x_1,x_2,\dots,x_k-1,\dots,x_K),\\
  & -\lambda-\mu\mathbbm{1}_{a\neq 0}-\sum_{k=1}^Kx_k\theta & \mbox{for } s_j=s_i,\\
  &0& \mbox{otherwise}.
\end{array}
\right.\]

 The following result from \cite{CTMDP} provides sufficient conditions for the continuous time Markov chain to be regular under any admissible policy.
 \begin{theorem}\label{A1}
For a continuous time MDP, if there exists a function $\omega\geq1$ on $S$ and constants $c_0\neq0$, $b_0\geq0$ and $L_0\geq0$ such that\\
(i) $\sum_{s_j\in S}\omega(s_j)q_{s_is_j}(a)\leq c_0\omega(s_i)+b_0$ for all $(s_i,a)\in S\times A(s_i)$,\\
(ii) $q^*(s):=\sup_{a\in A(s)}\{-q_{ss}(a)\}\leq L_0\omega (s)$ for all $s\in S$.\\
Then the continuous time Markov chain under any admissible policy $\pi\in\Pi$ is regular.
 \end{theorem}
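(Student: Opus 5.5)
This is a cited result from \cite{CTMDP} on general continuous-time MDPs with a Lyapunov-type weight function $\omega$. In the paper it is only invoked afterwards, by exhibiting such an $\omega$ for our queue, so a self-contained argument will be sketched here. Regularity means non-explosion: under any admissible (possibly randomized, history-dependent) policy, the jump times $T_n$ of the controlled chain satisfy $T_n\to\infty$ almost surely. The plan is to show that the process $\omega(X(t))$ cannot blow up in finite time, and then to use condition (ii) to convert this into non-explosion.

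\emph{Step 1 (drift bound).} Fix a policy $\pi$ and initial state $s$. Let $T_n$ be the $n$-th jump time and $X_n$ the state after it. Condition (i) says that the generator $Q(a)$ applied to $\omega$ satisfies $Q(a)\omega\le c_0\omega+b_0$. This holds for every action and hence for every randomized decision rule. By Dynkin's formula for the process stopped at $T_n$ (a valid application, since before $T_n$ only finitely many jumps occur), we get
\[
\mathbb{E}\bigl[\omega(X(t\wedge T_n))\bigr]\le \omega(s)+\int_0^t\bigl(c_0\,\mathbb{E}[\omega(X(u\wedge T_n))]+b_0\bigr)\,\mathrm{d}u .
\]
Gronwall's inequality then gives, for $c_0>0$, a bound of the form $(\omega(s)+b_0/c_0)e^{c_0t}$ that is uniform in $n$. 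If $c_0<0$, replace $c_0$ by $|c_0|$; the bound only weakens.

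\emph{Step 2 (from (ii) to non-explosion).} Let $T_\infty=\lim_n T_n$. Conditionally on the embedded chain, the holding time after $T_n$ is exponential with rate at most $q^*(X_n)\le L_0\omega(X_n)$. Explosion on $\{T_\infty\le t\}$ would require $\sum_n 1/q^*(X_n)<\infty$, and therefore $\omega(X_n)\to\infty$. To rule this out, consider the number of jumps $N(t)$ in $[0,t]$. Its compensator is $\int_0^t q(X(u))\,\mathrm{d}u\le L_0\int_0^t\omega(X(u))\,\mathrm{d}u$. Hence
\[
\mathbb{E}\bigl[N(t\wedge T_n)\bigr]\le L_0\int_0^t \mathbb{E}\bigl[\omega(X(u\wedge T_n))\bigr]\,\mathrm{d}u,
\]
and by Step 1 the right-hand side is bounded uniformly in $n$. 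Letting $n\to\infty$ and applying monotone convergence gives $\mathbb{E}[N(t\wedge T_\infty)]<\infty$. So $N(t\wedge T_\infty)$ is finite almost surely, which forces $\mathbb{P}(T_\infty\le t)=0$. Since $t$ is arbitrary, the chain is regular.

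\emph{Main obstacle.} The delicate point is making the stopped Dynkin/compensator argument rigorous for arbitrary history-dependent randomized policies, with unbounded rates. The stopping at $T_n$ is exactly what makes every integral finite and justifies the Gronwall step. For our model one would then take $\omega(s)=1+\sum_k x_k$. With this choice, (i) holds with $b_0=\lambda$, because abandonments only decrease $\omega$. Condition (ii) also holds, since $q^*(s)\le\lambda+\mu+\theta\sum_k x_k\le(\lambda+\mu+\theta)\,\omega(s)$.
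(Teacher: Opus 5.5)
Your argument is correct. The paper itself gives no proof of Theorem \ref{A1}; it quotes the result from \cite{CTMDP}. There, policies are randomized Markov, the controlled process is the minimal (Feller-constructed) transition function $p_\pi(s,i,t,j)$, and regularity means $\sum_j p_\pi(s,i,t,j)=1$. The result is therefore obtained analytically, at the level of transition functions, together with the $\omega$-moment bound $\sum_j\omega(j)p_\pi(s,i,t,j)\le e^{c_0(t-s)}\omega(i)+\frac{b_0}{c_0}\bigl(e^{c_0(t-s)}-1\bigr)$ that the average-reward theory behind Theorem \ref{A2} uses.

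Your route is pathwise instead. Dynkin and Gronwall for the process stopped at $T_n$ give a bound on $\mathbb{E}[\omega(X(t\wedge T_n))]$ that is uniform in $n$. Condition (ii) converts this into $\mathbb{E}[\min(N(t),n)]\le C(t)$, and $nP(T_\infty\le t)\le C(t)$ then rules out explosion. What this buys is a short, self-contained proof that separates the roles of the two conditions: (i) prevents $\omega$ from blowing up in mean in finite time, and (ii) dominates the jump rates by $\omega$. It also extends to history-dependent policies once the process is built from its jump hazards, and it recovers the same moment bound by Fatou once non-explosion is known. The analytic route, in turn, avoids constructing a path space for the time-inhomogeneous rates $q_{s_is_j}(\pi_t)$ and stays inside the transition-function framework used throughout \cite{CTMDP}.

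There are two small points to fix. First, stopping at $T_n$ does not by itself make the integrals finite, because a single jump can land where $\omega$ is arbitrarily large. The finiteness of $\mathbb{E}[\omega(X(t\wedge T_n))]$ that Dynkin and Gronwall need comes from (i), by induction over jumps: for example, $\mathbb{E}[\omega(X_k)\mathbbm{1}\{T_k\le t\}]\le(1+|c_0|t)\,\mathbb{E}[\omega(X_{k-1})\mathbbm{1}\{T_{k-1}\le t\}]+b_0t$, using that the conditional expected integrated hazard over one sojourn is at most one. Second, for the queue, the choice $\omega=1+\sum_k x_k$ with $b_0=\lambda$ only works with some $c_0>0$. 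That suffices for regularity, but Theorem \ref{A2}(i) needs the negative-drift choice $c_0=-\theta$, $b_0=\lambda+\theta$.
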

 By setting $\omega(s)=\max\{\sum_{k=1}^K x_k,1\}$ for $s=(x_1,x_2,\dots,x_K)$, $b_0=\lambda+\mu+\theta$, $c_0=-\theta$ and $L_0=\lambda+\theta+\mu$, we can satisfy the two conditions of Theorem \ref{A1}. Note that we have a $K$ dimensional denumerable state space $S$. We define the state $s=(0,0,\dots,0)$ as the first state $s_0$. Then, we order two states $x,y\neq0\in S$ as follows
\[
x \succ y
\;\Longleftrightarrow\;
\begin{cases}
\omega(x) > \omega(y),
\text{ or} \\
\omega(x) = \omega(y)
\ \text{and there exists } m\in\{1,\dots,K\} \text{ such that} \\
 x_1 = y_1,\; x_2 = y_2,\; \dots,\; x_{m-1} = y_{m-1},
\text{ and } x_m < y_m .
\end{cases}
\]
Also, we define
\[x \succeq y\quad \Longleftrightarrow\quad x\succ y \text{ or } x=y.\]
As a result, for any state $s\in S$, starting from $0\to(0,0,\dots,0)\in S$, a unique label (number) is assigned to the state according to the order defined above. Without loss of generality, we use the non-negative integers $\mathbb{N}$ to denote the numbered state space. Intuitively, the SREPT policy is the one that chooses the fastest descent direction (action) in the label of each state.

 Next, for a policy $\pi\in\Pi$, let $\rho_t(B|s)$ be the probability that the policy $\pi$ chooses actions in a Borel set $B\in\mathcal{B}(A(s))$ at time $t$. Then, we define the reward function as
 \[R(t,s,\pi):=\int_{A(s)}r(s,a)\rho_t(\mathrm{d}a|s).\]

 For any policy $\pi\in\Pi$, the long-run average reward when the initial state is $s$, is defined as 
\[g(s,\pi):=\liminf_{T\to\infty}\frac{1}{T}\mathbb{E}_s^\pi[\int_0^T R(t,X^\pi(t),\pi)\mathrm{d}t],\]
\noindent where the expectation is taken with respect to the continuous time Markov chain under policy 
$\pi$. The optimal gain is then defined as
\[g^*(s):=\sup_{\pi\in\Pi}g(s,\pi).\]
Let $\mathbb{B}_\omega$ be the space of all real-valued functions on $S$ with 
\[\sup_{s\in S}\big\{\frac{|g(s,\pi)|}{\omega(s)}\big\}<+\infty.\]
Define $\mathbb{F}$ as the set of all stationary deterministic policies. Then, to guarantee that for all 
$s\in S$, $g^*(s)<\infty$, there exists an optimal policy $\pi^*\in \mathbb{F}$, and a unique solution to the optimality equation that belongs to $\mathbb{B}_\omega$, we need to verify the following conditions of Assumptions 7.1, 7.4, and 7.5 from \cite{CTMDP}. The proof of Theorem \ref{A2} is given in the Appendix.
\begin{theorem}\label{A2}
For the long-run average reward criterion, if\\
(i) $c_0<0$ and $L_0>0$, where $c_0$ and $L_0$ are as defined in Theorem \ref{A1},\\
    (ii) for every $(s,a)\in S\times A(s)$ and some constant $M>0$,
\[|r(s,a)|\leq M\omega(s),\]
(iii) $A(s)$ is compact for all $s\in S$,\\
(iv) for all $s_i,s_j\in S$, the functions $r(s_i,a)$ and $q_{s_is_j}(a)$ are both continuous in $A(s_i)$,\\
(v) there exist a nonnegative function $\omega^\prime$ on $S$ and constants $c^\prime>0$, $b^\prime\geq0$ and $M^\prime>0$ such that 
\[q^*(s)\omega(s)\leq M^\prime \omega^\prime(s)\mbox{ and }\ \]
\[\sum_{s_j\in S}\omega^\prime(s_j)q_{s_is_j}(a)\leq c^\prime \omega^\prime(s_i)+b^\prime\mbox{ for all}\  (s_i,a)\in S\times A(s_i),\]
(vi) for each $\pi\in\mathbb{F}$, the corresponding continuous time Markov chain has a unique invariant probability measure,\\
(vii) for each fixed $\pi\in\mathbb{F}$, for all $s_i,s_k\in S$ such that $s_k\neq s_{i+1}$,
\[\sum_{s_j\succeq s_k}q_{s_is_j}(\pi)\leq \sum_{s_j\succeq s_k}q_{s_{i+1}s_j}(\pi),\]
(viii) for each fixed $\pi\in\mathbb{F}$, and $s_j\succ s_{i_1}\succ (0,0,\dots,0)$, there exist nonzero distinct states $s_{i_2},\dots,s_{i_m}\succeq s_j$ such that
\[q_{s_{i_1}s_{i_2}}(\pi)q_{s_{i_2}s_{i_3}}(\pi)\cdots q_{s_{i_{m-1}}s_{i_m}}(\pi)>0.\]

Then there exists a solution $(g^*,u)\in\mathbb{R}\times \mathbb{B}_\omega(S)$ to the optimality equation
\begin{equation}\label{OE}
    g^*=\sup_{a\in A(s_i)}\{r(s_i,a)+\sum_{s_j\in S}u(s_j)q_{s_is_j}(a)\}\qquad \forall s_i\in S.
\end{equation}
The constant $g^*=g^*(s), \forall s\in S$ and the bias vector $u$ is unique up to additive constants. Moreover, a stationary deterministic policy $\pi\in\mathbb{F}$ is optimal if and only if it attains the maximum in equation (\ref{OE}).
\end{theorem}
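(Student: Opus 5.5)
This theorem is essentially a repackaging of the average-reward existence theory in \cite{CTMDP}, so the plan is to map conditions (i)--(viii) onto Assumptions 7.1, 7.4 and 7.5 there and then invoke the corresponding theorem. The argument has two parts: a dictionary between the two sets of hypotheses, and a chain of intermediate results.

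For the dictionary, conditions (i), (ii) and the regularity bound of Theorem \ref{A1} give Assumption 7.1. The drift condition with $c_0<0$ yields $\omega$-exponential ergodicity-type bounds $\mathbb{E}^\pi_s[\omega(X(t))]\le e^{c_0t}\omega(s)+b_0/|c_0|$. Condition (ii) makes $r\in\mathbb{B}_\omega$ uniformly in $a$. Conditions (iii)--(iv) are the compactness/continuity requirements of Assumption 7.4. The second Lyapunov function in (v) supplies the extra integrability $q^*\omega\in\mathbb{B}_{\omega'}$ needed for Dynkin's formula to hold for $u\in\mathbb{B}_\omega$.

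Conditions (vi)--(viii) are the stochastic-monotonicity and irreducibility hypotheses. Together they give uniform exponential ergodicity with respect to $\omega$ over all $\pi\in\mathbb{F}$ (Assumption 7.5). In \cite{CTMDP} this is obtained via a Lund--Meyn--Tweedie coupling bound. Condition (vii) says the chain under a stationary $\pi$ is stochastically monotone in the state ordering $\succeq$, and (viii) gives the communication needed to reach the atom $(0,\dots,0)$.

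With these assumptions verified, the chain of results runs as follows:
\begin{enumerate}
\item The gain $g(\pi)$ of each $\pi\in\mathbb{F}$ is a constant, given by $\int r(\cdot,\pi)\,\mathrm{d}\mu_\pi$.
\item The bias $h_\pi(s)=\int_0^\infty(\mathbb{E}^\pi_s r(X(t),\pi)-g(\pi))\,\mathrm{d}t$ is finite, lies in $\mathbb{B}_\omega$ uniformly in $\pi$, and solves the Poisson equation $g(\pi)=r(s,\pi)+\sum_j h_\pi(s_j)q_{ss_j}(\pi)$.
\item Existence of a solution $(g^*,u)$ to (\ref{OE}) follows by the vanishing discount approach. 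Take discounted value functions $V_\alpha$ and set $u_\alpha=V_\alpha-V_\alpha(s_0)$. The $u_\alpha$ are uniformly bounded in $\mathbb{B}_\omega$ by the ergodicity bound, and $\alpha V_\alpha(s_0)$ is bounded. A diagonal argument over the countable $S$ extracts a convergent subsequence.
\item Passing to the limit in the discounted optimality equation gives (\ref{OE}). Here the compactness and continuity conditions let us exchange the supremum and the limit, and the $\omega'$ condition justifies dominated convergence in $\sum_j u_\alpha(s_j)q_{s_is_j}(a)$.
\end{enumerate}

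Optimality and uniqueness come next. Dynkin's formula applied to $u$ under an arbitrary $\pi\in\Pi$ gives $g(s,\pi)\le g^*$. Equality holds for any selector attaining the supremum in (\ref{OE}), and such a selector exists by compactness. Conversely, an optimal $\pi\in\mathbb{F}$ must attain the supremum $\mu_\pi$-a.e.; since $\mu_\pi$ charges every state by irreducibility, it attains it everywhere. For uniqueness of $u$ up to constants, take two solutions and consider their difference under a common maximizing policy. The difference is harmonic and lies in $\mathbb{B}_\omega$, so by ergodicity it is constant.

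Because the conclusion is exactly Theorem 7.8 of \cite{CTMDP}, the only genuine work, placed in the Appendix, is checking that (i)--(viii) imply the cited assumptions. The main obstacle is the uniform exponential ergodicity (Assumption 7.5). The plan there is to follow the monotone-coupling route: (vii) yields a stochastically ordered coupling of the chains started from $s$ and from $s_0$, and the drift bound controls the coupling time by $\omega(s)$.
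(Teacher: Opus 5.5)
\textbf{The gap: the ``only if'' direction.} Your Dynkin argument correctly shows that an optimal $\pi\in\mathbb{F}$ attains the supremum in (\ref{OE}) $\mu_\pi$-a.e. You then assert that $\mu_\pi$ charges every state ``by irreducibility,'' but none of (i)--(viii) supplies irreducibility. Condition (vi) only asks that the invariant measure be unique. Condition (viii) is an upward-reachability condition: it gives paths from $s_{i_1}$ into $\{s\succeq s_j\}$. It neither makes the chain irreducible nor gives the access to $(0,\dots,0)$ that you attribute to it.

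The paper's own model rules out full support. Under SREPT the invariant measure lives on the closed set $S^*\subsetneq S$. If you alter an optimal policy at states off its recurrent class, that class stays closed, and by (vi) the gain is unchanged. So nothing in the hypotheses forces attainment of the maximum off $\operatorname{supp}\mu_\pi$. What your argument actually yields is attainment on $\operatorname{supp}\mu_\pi$. That is exactly what the paper later relies on when it checks the maximum only on $S^*$.

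\textbf{Two further repairs.} First, uniqueness: there is in general no common maximizing policy. Take maximizers $f_1,f_2$ for $u_1,u_2$. Then $d=u_1-u_2$ is subharmonic under $f_1$ and superharmonic under $f_2$, so $\mu_{f_2}(d)\le d\le\mu_{f_1}(d)$, with equality on the respective supports. These supports must intersect: otherwise splicing $f_1$ and $f_2$ gives a stationary policy with two invariant measures, contradicting (vi). Hence $d$ is constant.

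Second, the Lund--Meyn--Tweedie coupling bound needs the additive constant in the drift inequality to sit only at $s_0$. Theorem~\ref{A1}(i) allows $b_0$ at every state, so the drift only brings the chain into a finite sublevel set of $\omega$. You would still need a route down to $s_0$ that is uniform in $\pi$, and (viii) does not provide one.

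\textbf{Comparison with the paper.} The paper's appendix does something different from your plan. It takes the implication itself from Guo and Hern\'andez-Lerma and only verifies (i)--(viii) for the Erlang queue: $c_0=-\theta$, $L_0=\lambda+\theta+\mu$, $M=\mu$, finite action sets, $\omega'=\omega^2$ with $M'=\lambda+\theta+\mu$ and $c'=b'=2\lambda$, and (vi)--(viii) attributed to the abandonments. Your reduction addresses the abstract implication but contains none of that model-specific checking.
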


Next, we will prove a critical bound for the difference between the values of the bias function evaluated in two different states.
\begin{lemma}\label{L11}
    For any stationary deterministic policy $\pi\in\mathbb{F}$, suppose $x=(x_1,x_2,\dots,x_K),y=(y_1,y_2,\dots,y_K)\in S$, and $x_i\geq y_i$ for all $i=1,\dots,K$. Then we have
    \[0\leq u(x)-u(y)\leq \sum_{i=1}^K (x_i-y_i).\]
\end{lemma}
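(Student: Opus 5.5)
The plan is to work with the probabilistic representation of the bias of the fixed policy $\pi\in\mathbb{F}$ and then compare the process started from $x$ with the one started from $y$. I expect this comparison to be the main obstacle, and possibly a genuine gap when $\pi$ is arbitrary, for the reason given in the third paragraph.

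\textbf{Step 1: representation of the bias.} Under the conditions verified for Theorem \ref{A2}, the chain under $\pi$ is $\omega$-exponentially ergodic with gain $g(\pi)$. Its bias (unique up to an additive constant) is then
\[
u(s)=\int_0^\infty \big(\mathbb{E}^\pi_s[r(X(t),\pi)]-g(\pi)\big)\,\mathrm{d}t .
\]
Subtracting the representations for $x$ and $y$ cancels the constant $g(\pi)$ and gives
\[
u(x)-u(y)=\lim_{T\to\infty}\Big(\mathbb{E}^\pi_x\Big[\int_0^T r\,\mathrm{d}t\Big]-\mathbb{E}^\pi_y\Big[\int_0^T r\,\mathrm{d}t\Big]\Big).
\]
Since $r=\mu\mathbbm{1}_{a=1}$, each expectation is $\mathbb{E}[N(T)]$, where $N(T)$ counts service completions in $[0,T]$. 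So $u(x)-u(y)$ is the limiting difference in expected completions between the two starts.

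\textbf{Step 2: coupling.} Put both systems on one probability space. They share the Poisson arrival stream, and the customers common to both get the same patience clocks and the same phase-completion clocks. The $\sum_i(x_i-y_i)$ extra customers present only in the $x$-system are marked and carry their own independent exponential($\theta$) patience clocks. Every marked customer leaves after an a.s. finite time, so the systems merge in finite time. The merging time has finite mean because of the abandonment drift $c_0=-\theta$ in Theorem \ref{A1}, which justifies exchanging the limit and the expectation.

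\textbf{Step 3: the two bounds.} Each marked customer can produce at most one extra completion. So if the unmarked customers evolve identically in the two systems, the difference in completions lies in $[0,\sum_i(x_i-y_i)]$, which gives both inequalities.

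\textbf{Main obstacle.} The premise of Step 3 fails in general. Under an arbitrary stationary deterministic $\pi$, the actions taken in the coupled states can differ. For example, $\pi$ may serve a marked customer in the $x$-system while it serves an unmarked one in the $y$-system. Or $\pi$ may idle in one system and serve in the other, and then the $x$-system can even lose completions relative to the $y$-system. A pure sample-path coupling therefore does not give the bounds for arbitrary $\pi$.

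\textbf{Fallback: Poisson-equation argument.} First reduce to $x=y+e_i$, that is, one extra customer with $i$ remaining phases; the general case follows by telescoping along a monotone path from $y$ to $x$. Write the Poisson equation $g=r(s,\pi(s))+\sum_j u(s_j)q_{ss_j}(\pi(s))$ at $s=y+e_i$ and at $s=y$, and subtract. This yields a Poisson-type equation for the difference $D(y)=u(y+e_i)-u(y)$. In it the extra abandonment rate $\theta$ acts as killing, and the remaining terms are bounded by $\mu$ differences of single-step rewards.

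Represent $D$ as the expected total discounted-by-killing source along the chain. Then show the source lies between $0$ and $\theta$ times the killing weight, which gives $0\le D\le 1$. To control the sign of the source, the natural tools are the ordering $\succ$ together with the monotone-transition conditions (vii) and (viii) of Theorem \ref{A2}. I expect the sign of this source, which depends on how $\pi$ acts in $y+e_i$ versus $y$, to be the crux. If it cannot be signed for arbitrary $\pi$, the lemma would need to be restricted to policies $\pi$ that act consistently across $y$ and $y+e_i$, such as SREPT or non-idling index policies.
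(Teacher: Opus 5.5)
Your Steps 1--3 are essentially the paper's own proof. It writes $X^\pi(t)=(X^\pi(t)-\Delta_{xy})+\Delta_{xy}$ and asserts $\mathbb{E}^\pi_x[r(X^\pi(t),\pi)]=\mathbb{E}^\pi_y[r(X^\pi(t),\pi)]+\mathbb{E}^\pi[r(\Delta_{xy},\pi)]$. That identity silently assumes the unmarked customers evolve exactly as in the $y$-system and that the single server's reward splits additively between the two groups. The obstacle you raise is real, and at the stated generality it is fatal, because the lower bound is false for some $\pi\in\mathbb{F}$.

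Take $K=1$ and let $\pi$ serve only when exactly one customer is present. Let $\nu$ be the stationary distribution of $\pi$. The Poisson equations at states $0$ and $1$ read $g=\lambda(u(1)-u(0))$ and $g=\mu+\lambda(u(2)-u(1))+(\mu+\theta)(u(0)-u(1))$. Substituting $g=\mu\nu_1$ and the balance relation $\lambda\nu_0=(\mu+\theta)\nu_1$ gives
\[
u(2)-u(1)=-\frac{\mu}{\lambda}\,(1-\nu_0-\nu_1)<0 .
\]
For general $K$, use the same policy with $x=(0,\dots,0,2)$ and $y=(0,\dots,0,1)$. Starting from $x$, nothing is served before the count first drops to one, and the state at that moment is exactly $y$. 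Hence $u(x)-u(y)=-g\,\mathbb{E}_x[\tau_y]<0$, since $g>0$. So your fallback cannot be completed for arbitrary $\pi$ either: the source term in your killed Poisson equation genuinely has the wrong sign for such policies.

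What can be salvaged is a restricted statement. Downstream, the lemma is used only through Corollary~\ref{BD}, to get $\Delta u\in\ell^\infty$ for the SREPT bias before invoking Lemma~\ref{L2}. So you only need bounded neighbour differences for that one policy.

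Be careful, though: SREPT does not ``act consistently'' in your sense when $K\ge 2$. If the extra customer has fewer remaining phases than the customer SREPT serves in the $y$-system, the $x$-system preempts in its favour. Then the unmarked customers no longer evolve identically. You would need either a coupling that transfers phase completions between the two served customers, or an actual signing of your source term specialised to SREPT.

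For $K=1$ your coupling does work for any non-idling rule, hence for SREPT. Let the $x$-server serve the same unmarked customer as the $y$-server whenever the $y$-system is nonempty, and a marked customer otherwise. Then the unmarked customers evolve identically in both systems and each marked customer contributes at most one completion. This gives $0\le u(x)-u(y)\le x_1-y_1$.
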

\begin{proof}
    According to the definition of the true bias function (see equation (7.18) in \cite{CTMDP}), we have
    \[u(x)=\int_0^\infty\big{(}\mathbb{E}_x^{\pi} [r(X^{\pi}(t),\pi)]-g^{\pi}\big{)}\ \mathrm{d}t\]
   where $g^{\pi}=g(s,\pi)$ since the gain is constant under any stationary policy $\pi$. As a result, the difference $u(x)-u(y)$ can be written as
    \[u(x)-u(y)=\int_0^\infty\mathbb{E}_x^{\pi} [r(X^{\pi}(t),\pi)]-\mathbb{E}_y^{\pi} [r(X^{\pi}(t),\pi)]\ \mathrm{d}t.\]
   Consider $X^{\pi}(t)$ in the first expectation and write it as $X^{\pi}(t)=(X^{\pi}(t)-\Delta_{xy})+\Delta_{xy}$ where $\Delta_{xy}=(x_1-y_1,x_2-y_2,\dots,x_K-y_K)$. As a result, all the subsequent transitions from state $x$ can be classified into two different groups: those in $X(t)-\Delta_{xy}$ and those in $\Delta_{xy}$. Note that in the integral, $\Delta_{xy}$ does not depend on the current state. Therefore, the rewards obtained from the process $\Delta_{xy}$ (i.e. $r(\Delta_{xy},\pi)$) is upper bounded by the number of customers in $\Delta_{xy}$. We then have
    \begin{eqnarray}
 u(x)-u(y)&=& \int_0^\infty\mathbb{E}_x^{\pi} [r(X^{\pi}(t)-\Delta_{xy}+\Delta_{xy},\pi)]-\mathbb{E}_y^{\pi} [r(X^{\pi}(t),\pi)]\ \mathrm{d}t\nonumber\\
 &=&\int_0^\infty \mathbb{E}_y^{\pi}[r(X^{\pi}(t),\pi)]+\mathbb{E}^{\pi}[r(\Delta_{xy},\pi)]-\mathbb{E}_y^{\pi} [r(X^{\pi}(t),\pi_t)]\ \mathrm{d}t\nonumber\\
 &=&\int_0^\infty\mathbb{E}^{\pi}[r(\Delta_{xy},\pi)]\leq \sum_{i=1}^K(x_i-y_i).\nonumber   
 \end{eqnarray}
    The inequality follows from the observation that, in the original system, a reward of 1 is received upon each service completion. Furthermore, the rewards $r(\Delta_{xy},\pi)$ are non-negative. Therefore, 
    \[0\leq u(x)-u(y)\leq \sum_{i=1}^K (x_i-y_i).\]
 \end{proof}
 \begin{corollary}\label{BD}
     For any stationary deterministic policy $\pi\in\mathbb{F}$, suppose $x=(x_1,x_2,\dots,x_K),y=(y_1,y_2,\dots,y_K)\in S$. Then 
    \[|u(x)-u(y)|\leq \sum_{i=1}^K |x_i-y_i|.\]
 \end{corollary}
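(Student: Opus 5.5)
The plan is to reduce Corollary \ref{BD} to Lemma \ref{L11} by passing through an intermediate state that is componentwise dominated by both $x$ and $y$. Define $z=(z_1,\dots,z_K)$ with $z_i=\min\{x_i,y_i\}$ for $i=1,\dots,K$. Then $z\in S$, and $x_i\geq z_i$ and $y_i\geq z_i$ for every $i$. So Lemma \ref{L11}, applied with the same stationary deterministic policy $\pi\in\mathbb{F}$ to the pairs $(x,z)$ and $(y,z)$, gives
\[
0\leq u(x)-u(z)\leq \sum_{i=1}^K (x_i-z_i),\qquad 0\leq u(y)-u(z)\leq \sum_{i=1}^K (y_i-z_i).
\]

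Next I write $u(x)-u(y)=\bigl(u(x)-u(z)\bigr)-\bigl(u(y)-u(z)\bigr)$. Both bracketed terms are nonnegative, so their difference satisfies
\[
|u(x)-u(y)|\leq \max\Bigl\{\sum_{i=1}^K (x_i-z_i),\ \sum_{i=1}^K (y_i-z_i)\Bigr\}\leq \sum_{i=1}^K\bigl[(x_i-z_i)+(y_i-z_i)\bigr].
\]

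Finally I use the identity $(x_i-z_i)+(y_i-z_i)=x_i+y_i-2\min\{x_i,y_i\}=|x_i-y_i|$ coordinatewise. This yields $|u(x)-u(y)|\leq\sum_{i=1}^K|x_i-y_i|$, which is the claim.

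There is no real obstacle here. The only point to check is that $z$ is a legitimate state with nonnegative integer components, which holds because $S=\mathbb{N}^K$. The max bound is in fact sharper than the stated one, which we do not need.
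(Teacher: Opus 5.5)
Your proposal is correct and is essentially the paper's proof: the paper also introduces $z=(x_1\wedge y_1,\dots,x_K\wedge y_K)$, applies Lemma \ref{L11} to the pairs $(x,z)$ and $(y,z)$, and uses $(x_i-z_i)+(y_i-z_i)=|x_i-y_i|$. The only difference is that the paper combines the two bounds with the triangle inequality rather than your (slightly sharper) max bound, which changes nothing in the conclusion.
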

 \begin{proof}
     Consider the third state $z=(x_1\wedge y_1,x_2\wedge y_2,\dots,x_K\wedge y_K)$. We have
     \[|u(x)-u(y)|\leq |u(x)-u(z)|+|u(y)-u(z)|.\]
     From Lemma \ref{L11}, we know that $0\leq u(x)-u(z)\leq\sum_{i=1}^K (x_i-z_i)$ and $0\leq u(y)-u(z)\leq\sum_{i=1}^K (y_i-z_i)$. Then, from the definition of state $z$, we have
   \[|u(x)-u(y)|\leq |u(x)-u(z)|+|u(y)-u(z)|=u(x)-u(z)+u(y)-u(z)\leq \sum_{i=1}^K(x_i-z_i)+\sum_{i=1}^K(y_i-z_i)\leq \sum_{i=1}^K |x_i-y_i|.\]
 \end{proof}
     Next, we prove a lemma that provides the properties of the solution of a system of linear equations, which is crucial for the proofs of the optimality of the SREPT policy.
 
 \begin{lemma}\label{L2}
    Let $B=(b_{ij})_{i,j\in \mathbb{N}}$ be an operator on $\ell^\infty$ such that\\
    (i) for all $i,j\in \mathbb{N}$, $b_{ii}>0$ and $b_{ij}\leq 0$ for $j\neq i$,\\
   (ii) $d_i:=\sum_{j\in \mathbb{N}}b_{ij}>0$, $d:=\inf_{i\in \mathbb{N}} d_i>0$,\\
   (iii) for all $i\in \mathbb{N}$, $\sum_{j> i}|b_{ij}|<\beta_0$ for some constant $\beta_0$.\\
   Then, for any $t\in\ell^\infty$ with $t_i\geq0$ and $u\in\ell^\infty$ satisfying the system of equations
    \[Bu=t,\]
    we have $u\geq0$.
\end{lemma}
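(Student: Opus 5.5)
The plan is a discrete minimum principle. Since $u\in\ell^\infty$, set $m:=\inf_{i\in\mathbb{N}}u_i>-\infty$ and suppose, for contradiction, that $m<0$. The idea is to evaluate row $i$ of $Bu=t$ at an index where $u_i$ is at or near $m$. Because the off-diagonal entries are nonpositive by (i), replacing every $u_j$ ($j\neq i$) by $m$ can only increase the left-hand side. Then (ii) should force $t_i<0$, contradicting $t_i\geq0$.

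\emph{Step 1: the minimum is attained.} If $u_{i_0}=m$ for some $i_0$, then, using $b_{i_0j}\le 0$ and $u_j\ge m$,
\[
0\le t_{i_0}=b_{i_0i_0}m+\sum_{j\neq i_0}b_{i_0j}u_j\le b_{i_0i_0}m+\sum_{j\neq i_0}b_{i_0j}m=d_{i_0}m\le d\,m<0 ,
\]
which is the desired contradiction. Note that the sums converge absolutely: $\sum_{j<i}|b_{ij}|$ is a finite sum, and (iii) bounds $\sum_{j>i}|b_{ij}|$. Hence $(Bu)_i$ is well defined for bounded $u$, and the rearrangements above are legitimate.

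\emph{Step 2: the minimum is only approached.} Rewrite row $i$ in the equivalent form
\[
b_{ii}(u_i-m)=t_i+d_i|m|+\sum_{j\neq i}|b_{ij}|(u_j-m)\;\ge\; d|m|+\sum_{j\neq i}|b_{ij}|(u_j-m).
\]
For a near-minimiser $i$ with $u_i-m<\varepsilon$, this reads
\[
\sum_{j\neq i}|b_{ij}|(u_j-m)+d|m|\le b_{ii}\varepsilon .
\]
If the diagonal entries $b_{ii}$ stay bounded along a minimising sequence, letting $\varepsilon\to0$ gives $d|m|\le0$, so $m\ge 0$, a contradiction.

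\emph{Main obstacle: $b_{ii}$ unbounded along the minimising sequence.} This is the step I expect to be hardest; it can occur in our application, where $b_{ii}$ grows like the number of customers. Here I would use (iii). Since $\sum_{j>i}|b_{ij}|<\beta_0$, almost all of the weight $b_{ii}-d_i$ sits on the finitely many indices $j<i$. The displayed inequality therefore says that a weighted average of $u_j-m$ over $j<i$, with weights summing to roughly $b_{ii}$, is at most about $\varepsilon$, up to the additive correction $d|m|/b_{ii}$. So some lower index $j<i$ is again a near-minimiser. I would iterate this descent, strictly decreasing the index, until it enters a fixed finite set $\{0,\dots,N\}$. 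On that set the diagonal entries are bounded, so the argument of the previous paragraph applies and yields $d|m|\le 0$.

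The delicate part is controlling how the error $\varepsilon$ accumulates along the descent chain. I would try to bound it using the uniform margin $d$ together with $\beta_0$. If that control fails, my fallback is a truncation argument. I would solve the finite sections $B_N u^{(N)}=t^{(N)}$, each of which is a strictly diagonally dominant $M$-matrix and so has a nonnegative inverse, giving $u^{(N)}\ge0$. I would then show $u^{(N)}\to u$ componentwise, using the uniform bound from (iii) on the neglected tail coupling to the indices beyond $N$.
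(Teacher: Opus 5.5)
Your Step 1 and the bounded-diagonal part of Step 2 are correct. However, the case you yourself flag as the main obstacle (infimum not attained, $b_{ii}$ unbounded along near-minimisers) is left unproved, and that is exactly where hypothesis (iii) has to enter. The iterated descent is only sketched, and you explicitly leave its error control open.

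The truncation fallback does not go through on the grounds you state. For rows $i$ near $N$, the neglected coupling $\sum_{j>N}b_{ij}u_j$ is merely bounded by $\beta_0\|u\|_\infty$, not small. So componentwise convergence $u^{(N)}\to u$ would need a separate argument. That convergence is in fact at least as strong as uniqueness of bounded solutions of $Bu=t$, which is what the lemma itself is meant to deliver.

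There is also a loss inside Step 2 itself. Replacing $t_i+d_i|m|$ by $d|m|$ throws away information you need when $d_i$ is unbounded. This, rather than anything intrinsic, is what makes the descent threshold appear to degrade.

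The missing idea is an extremal choice of index, which makes iteration unnecessary. Fix $0<\varepsilon<|m|$ and let $i$ be the \emph{smallest} index with $u_i<m+\varepsilon$. Then $u_j-m\ge\varepsilon$ for every $j<i$. Write $L_i=\sum_{j<i}|b_{ij}|$ and $U_i=\sum_{j>i}|b_{ij}|<\beta_0$, so that $L_i=b_{ii}-d_i-U_i$. Your own identity, keeping $d_i$, gives
\[
b_{ii}\varepsilon>b_{ii}(u_i-m)\ge d_i|m|+\varepsilon L_i=d_i|m|+\varepsilon\,(b_{ii}-d_i-U_i).
\]
The large term $b_{ii}\varepsilon$ cancels exactly, leaving
\[
d(|m|-\varepsilon)\le d_i(|m|-\varepsilon)<\varepsilon U_i<\varepsilon\beta_0 .
\]
Letting $\varepsilon\to0$ gives $d|m|\le 0$, a contradiction. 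This is the paper's argument.

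Had you kept only $d|m|$, you would get $d|m|<\varepsilon(d_i+U_i)$ instead, which yields nothing if $d_i$ is unbounded along the selected indices.

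In your descent language, the point is this. Once $\varepsilon<d|m|/(d+\beta_0)$, the weighted average of $u_j-m$ over $j<i$ is strictly below $\varepsilon$ itself, so the threshold never degrades. The descent would then produce an infinite strictly decreasing sequence of indices in $\{j:u_j<m+\varepsilon\}$, which is impossible in $\mathbb{N}$.
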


\begin{proof} 

Define two operators $C$ and $D$ as:
\[
D = \operatorname{diag}(b_{ii}), \quad C = (c_{ij})_{i,j\in S}, \quad c_{ij} = \begin{cases} -b_{ij}, & i\neq j, \\ 0, & i=j. \end{cases}
\]
Then $C\ge 0$ and $D\geq 0$ entrywise, and $B = D - C$. For any $u\in\ell^\infty$ satisfying $Bu=t$, we have
\[u=D^{-1}\big{(}t+Cu\big{)}.\]

It suffices to show that $\inf_{i\in S} u_i\geq0$. We use contradiction. Since $u\in\ell^\infty$, we know that $m:=\inf_{i\in S}u_i>-\infty$. Suppose $m<0$. First, assume the infimum is attained at some state $i\in S$ (i.e. $u_i=m$), we then immediately have
\[u_i=\frac{1}{b_{ii}}(t_i-\sum_{j\neq i}b_{ij}u_j)\geq \frac{1}{b_{ii}}(t_i-\sum_{j\neq i}b_{ij}m)>m\]
because from assumptions (i) and (ii), we have $-1<\frac{\sum_{j\neq i}b_{ij}}{b_{ii}}<0$, which contradicts $u_i=m$. On the other hand, if the infimum is not attained, then for any $\epsilon>0$, there exists $i\in S$ such that $m<u_i<m+\epsilon$ and $u_j\geq m+\epsilon$ for all $j< i$. Then, we have
\[u_i=\frac{1}{b_{ii}}(t_i-\sum_{j\neq i}b_{ij}u_j)\geq \frac{1}{b_{ii}} \left ( \sum_{j< i}-b_{ij}u_j+\sum_{j> i}-b_{ij}u_j\right )> \left ((m+\epsilon) \sum_{j< i}-\frac{b_{ij}}{b_{ii}}+m\sum_{j> i}-\frac{b_{ij}}{b_{ii}}\right ).\]
From assumptions (i) and (ii) we know that $\sum_{j\neq i}-\frac{b_{ij}}{b_{ii}}\leq 1-\frac{d}{b_{ii}}$. Therefore,
\[u_i>\left ((m+\epsilon) \sum_{j< i}-\frac{b_{ij}}{b_{ii}}+m\sum_{j> i}-\frac{b_{ij}}{b_{ii}}\right )\geq \left ((m+\epsilon) (1-\frac{d}{b_{ii}}+\sum_{j> i}\frac{b_{ij}}{b_{ii}})+m\sum_{j> i}-\frac{b_{ij}}{b_{ii}}\right ).\]
Rearranging the inequality, we have
\[u_i>(m+\epsilon)- \left ((m+\epsilon) (\frac{d}{b_{ii}}-\sum_{j> i}\frac{b_{ij}}{b_{ii}})+m\sum_{j> i}\frac{b_{ij}}{b_{ii}}\right ).\]
Note that $u_i<m+\epsilon$. Then
\[(m+\epsilon)(d-\sum_{j> i}b_{ij})+m\sum_{j> i}b_{ij}>0\]
and 
\[md+\epsilon(d-\sum_{j> i}b_{ij})>0.\]
Finally, from assumption (iii), we have $-\sum_{j> i}b_{ij}$ uniformly bounded above by constant $\beta_0$ for all $i\in S$. Then, letting $\epsilon\to0$ we have $md>0$, which contradicts $m<0$. Therefore, $\inf_{i\in S}u_i\geq0$ and the vector $u$ is non-negative.
\end{proof}
In the remainder of the paper, we adopt a slight abuse of notation: when referring to the entries of a matrix $B=(b_{ij})$ and a vector $t=(t_i)$, we will instead write $B=(b_{s,s'})$ and $t=(t_s)$, when $i$ and $j$ are indices corresponding to states $s$ and $s'$, respectively, according to the ordering defined above.
\subsection{Optimality of the SREPT policy}

To begin with, we introduce the following notation to simplify the state transitions. From now on, we assume state $s=(x_1,x_2,\dots,x_K)$ and define states
\[[s,\lambda]=(x_1,x_2,\dots,x_K+1),\]
\[[s,\theta_i]=(x_1,\dots,x_i-1,\dots,x_K), \text{ for }i=1,2,\dots,K,\]
\[[s,i]=(x_1,\dots,x_{i-1}+1,x_i-1,\dots,x_K),\text{ for }i=2,\dots,K,\]
\[[s,1]=(x_1-1,x_2,\dots,x_K)\text{ and }[s,0]=[s,\theta_0]=s.\]
 Then, the optimality equation in (\ref{OE}) can be written as 
\begin{equation}\label{OE2}
    g^*=\lambda(u([s,\lambda])-u(s))+\theta\sum_{k=1}^K x_k (u([s,\theta_k])-u(s))+\mu\max_{i\in A(s)}\{u([s,i])-u(s)+\mathbbm{1}_{i=1}\}.
\end{equation}

Define $a_s=\min\{i:x_i>0\}$ which is the action that SREPT policy chooses in state $s\succ0\in S$. Note that under the SREPT policy (which in this case is equivalent to FCFS), the set of recurrent states is given by
\[S^*:=\{s\in S|\sum_{k=1}^{K-1}x_k\leq1\}\]
since all arriving customers have $K$ remaining phases to serve. 

Therefore, in order to prove the optimality of the SREPT policy, it is sufficient to show for all $s\succ 0\in S^*$ that $a_s$ attains the maximum in equation (\ref{OE2}) (See Chapter 8.6 in \cite{Put}), i.e.
\[\mathop{argmax}_{i\in A(s)}\{u([s,i])-u(s)+\mathbbm{1}_{i=1}\}=a_s \]
where $u$ is the bias function of the SREPT policy. Then, from the Poisson equation for the SREPT policy (see Proposition 7.11 of \cite{CTMDP}), we have for all $s\succ 0\in S$,
\begin{equation}\label{OES1}
    (\lambda+\theta\sum_{k=1}^Kx_k+\mu)u(s)+g=\lambda u([s,\lambda])+\theta\sum_{k=1}^K x_k u([s,\theta_k])+\mu (u([s,a_s])+\mathbbm{1}_{a_s=1}).
\end{equation}
Define for $i,j \in A(s)$
\[\Delta u(s;i;j)=u([s,i])-u([s,j])+\mathbbm{1}_{i=1}-\mathbbm{1}_{j=1}.\]
Then, to show the optimality of the SREPT policy, one needs to show that
\[\Delta u(s;a_s;j)\geq0 \quad \text{for all }j\in A(s),\ s\in S^*.\]
Note that for any $s\in S^*$, there is at most one customer with less than $K$ remaining phases to serve (and that customer with $a_s$ remaining phase is the one that SREPT policy will continue to serve). Therefore, it suffices to show that for all $s\succ 0\in S^*$,
\[\Delta u(s;a_s;0)\geq0 \quad \text{and} \quad  \Delta u(s;a_s;K)\geq0.\]
In traditional Markov decision process theory, these types of inequalities are typically established via induction. However, such arguments would be difficult (if not infeasible) in our current setting. We instead utilize Lemma \ref{L2} to directly prove the non-negativity of the two aforementioned differences.
\subsubsection{Proof of $\Delta u(s;a_s;0)\geq0$}
Throughout our analysis, for notational simplicity, if a transition yields a state that is not in the state space, we still keep it in our expression (for uniformity) as it cancels with a corresponding term. 
\begin{lemma}
For all $s\succ0\in S$, we have    \begin{eqnarray}\label{OES4}
    (\lambda+\theta\sum_{k=1}^Kx_k+\mu)\Delta u(s;a_s;0)&=&\lambda \Delta u([s,\lambda];a_s;0)+\theta\mathbbm{1}_{a_s=1}\nonumber\\
    &+&\theta\bigg{(}\sum_{k=1}^K x_k \Delta u([s,\theta_k];a_s;0)-\Delta u([s,\theta_{a_s}];a_s;0)\bigg{)}\nonumber\\
    &+&\mu \Delta u([s,a_s];a_{[s,a_s]};0).
\end{eqnarray}
\end{lemma}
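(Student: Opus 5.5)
The plan is to derive the identity by evaluating the Poisson equation (\ref{OES1}) at the two states $[s,a_s]$ and $[s,0]=s$ and subtracting. By definition, $\Delta u(s;a_s;0)=u([s,a_s])-u(s)+\mathbbm{1}_{a_s=1}$. I will write (\ref{OES1}) at $s'=[s,a_s]$ and at $s$, subtract the second from the first, and regroup each transition type into a $\Delta$ term. The gain $g$ cancels. On the right-hand side, the $\lambda$ terms give $\lambda(u([[s,a_s],\lambda])-u([s,\lambda]))$. Since arrivals add a phase-$K$ customer and do not change which customer SREPT serves (when $a_s<K$; when $a_s=K$ I will check the case separately), we have $[[s,a_s],\lambda]=[[s,\lambda],a_s]$ and $a_{[s,\lambda]}=a_s$. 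Hence this term equals $\lambda\Delta u([s,\lambda];a_s;0)$, up to the indicator term that I handle at the end.

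Next come the service terms. At $s$, the service term is $\mu(u([s,a_s])+\mathbbm{1}_{a_s=1})$. At $[s,a_s]$, it is $\mu(u([[s,a_s],a_{[s,a_s]}])+\mathbbm{1}_{a_{[s,a_s]}=1})$. Their difference is exactly $\mu\Delta u([s,a_s];a_{[s,a_s]};0)$, up to indicator bookkeeping.

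The abandonment terms need the most care. The state $[s,a_s]$ has one fewer customer in class $a_s$ and, if $a_s\ge2$, one more in class $a_s-1$. I will write the total rate $\theta\sum_k x'_k$ of $[s,a_s]$ as $\theta\sum_k x_k-\theta\mathbbm{1}_{a_s=1}$. Then I move the coefficient mismatch to the left-hand side, so that both sides carry the common factor $(\lambda+\theta\sum_k x_k+\mu)$. The abandonment sum at $[s,a_s]$ pairs term by term with the sum at $s$ through the commutation $[[s,a_s],\theta_k]=[[s,\theta_k],a_s]$ for $k\neq a_s$. This leaves one missing customer in class $a_s$, which produces the subtraction of $\Delta u([s,\theta_{a_s}];a_s;0)$. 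For $a_s\ge 2$ there is also an extra customer in class $a_s-1$, whose abandonment lands at $[s,\theta_{a_s}]$. That term cancels against the unpaired $x_{a_s}$-th term at $s$, and this is where the convention of keeping states outside $S$ is used. This regrouping also produces the $\theta\mathbbm{1}_{a_s=1}$ term: when $a_s=1$, the left-hand side needs $\theta\,\mathbbm{1}_{a_s=1}$ in $\Delta u(s;a_s;0)$ multiplied by the full rate, while the raw difference only supplies it with the reduced rate.

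The main obstacle is the bookkeeping of the indicators $\mathbbm{1}_{a_s=1}$ together with the mismatch of total rates between $s$ and $[s,a_s]$. I will first add $(\lambda+\theta\sum_k x_k+\mu)\mathbbm{1}_{a_s=1}$ to both sides. I will then check that the $\lambda$, $\mu$ and $\theta$ pieces each absorb their share of the indicator, leaving exactly $\theta\mathbbm{1}_{a_s=1}$. Finally, I will separately verify the edge cases where the commutation identities fail: $a_s=K$ (the arrival changes $x_K$), $x_{a_s}=1$ for the term involving $a_{[s,a_s]}$, and the case $[s,a_s]=0$, where (\ref{OES1}) is not stated and the Poisson equation at state $0$, with no service term, must be used instead.
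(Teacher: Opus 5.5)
Your proposal is correct and follows essentially the same route as the paper: subtract the Poisson equation (\ref{OES1}) at $s$ from the one at $[s,a_s]$. Then use $[[s,a_s],\theta_{a_s-1}]=[s,\theta_{a_s}]$ (and, when $a_s=1$, the fact that $[s,1]=[s,\theta_1]$) to absorb the unpaired abandonment terms, and for $a_s=1$ add $\theta u([s,1])$ and $(\lambda+\theta\sum_k x_k+\mu)$ to both sides to fix the rate mismatch and the indicators. You also flag the edge case $[s,a_s]=0$, which the paper leaves implicit; there you should say explicitly that the last term is read as $\mu\Delta u(0;0;0)=0$ (idling at state $0$), and with that reading the identity checks out.
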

\begin{proof}
    We first write the Poisson equation in (\ref{OES1}) for state $[s,a_s]$. If $a_s>1$, then the number of customers in state $[s,a_s]$ is the same as the one in state $s$, since in state $[s,a_s]$, there is one more customer with $a_s-1$ remaining phases and one less customer with $a_s$ remaining phases. Therefore, for $a_s>1$, we have

    \begin{eqnarray}\label{OES2}
    (\lambda+\theta\sum_{k=1}^Kx_k+\mu)u([s,a_s])+g&=&\lambda u([[s,a_s],\lambda])\nonumber\\
    &+&\theta\bigg{(}\sum_{k=1}^K x_k u([[s,a_s],\theta_k])+u([[s,a_s],\theta_{a_s-1}])-u([[s,a_s],\theta_{a_s}])\bigg{)}\nonumber\\
    &+&\mu (u([[s,a_s],a_s-1])+\mathbbm{1}_{a_s-1=1}).
\end{eqnarray}
If $a_s=1$, then there is one less customer in state $[s,a_s]$ than in state $s$. As a result, we have
\begin{eqnarray}\label{OES3p}
    (\lambda+\theta(\sum_{k=1}^Kx_k-1)+\mu)u([s,1])+g&=&\lambda u([[s,1],\lambda])\nonumber\\
    &+&\theta\bigg{(}\sum_{k=1}^K x_k u([[s,1],\theta_k])-u([[s,1],\theta_{1}])\bigg{)}\nonumber\\
    &+&\mu (u([[s,1],a_{[s,1]}])+\mathbbm{1}_{a_{[s,1]}=1}).
\end{eqnarray}
Adding $\theta u([s,1])$ and $(\lambda+\theta\sum_{k=1}^Kx_k+\mu)$ on both sides of equation (\ref{OES3p}) yields
\begin{eqnarray}\label{OES3}
    (\lambda+\theta\sum_{k=1}^Kx_k+\mu)(u([s,1])+1)+g&=&\lambda (u([[s,1],\lambda])+1)\nonumber\\
    &+&\theta\bigg{(}\sum_{k=1}^K x_k (u([[s,1],\theta_k])+1)+u([s,1])-\left(u([[s,1],\theta_{1}])+1\right)+1\bigg{)}\nonumber\\
    &+&\mu (u([[s,1],a_{[s,1]}])+1+\mathbbm{1}_{a_{[s,1]}=1}).
\end{eqnarray}
     Note that in equation (\ref{OES2}) for $a_s>1$, $[[s,a_s],\theta_{a_s-1}]=[s,\theta_{a_s}]$. Therefore, for $x_{a_s}>1$,
     \[u([[s,a_s],\theta_{a_s-1}])-u([[s,a_s],\theta_{a_s}])=-\Delta u([s,\theta_{a_s}];a_s;0).\]
     Similarly, in equation (\ref{OES3}), for $x_1>1$,
     \[u([s,1])-u([[s,1],\theta_{1}]+1)=-\Delta u([s,\theta_1];1;0).\]
    Subtracting (\ref{OES1}) from (\ref{OES2}) (for $a_s>1$) or (\ref{OES3}) (for $a_s=1$) and using the definition of $\Delta u(s;i;j)$, we have
     \begin{eqnarray}
    (\lambda+\theta\sum_{k=1}^Kx_k+\mu)\Delta u(s;a_s;0)&=&\lambda \Delta u([s,\lambda];a_s;0)+\theta\mathbbm{1}_{a_s=1}\nonumber\\
    &+&\theta\bigg{(}\sum_{k=1}^K x_k \Delta u([s,\theta_k];a_s;0)-\Delta u([s,\theta_{a_s}];a_s;0)\bigg{)}\nonumber\\
    &+&\mu \Delta u([s,a_s];a_{[s,a_s]};0).\nonumber
\end{eqnarray}
\end{proof}
\begin{corollary}\label{C1}
    For all $s\succ 0\in S$, we have
    \[\Delta u(s;a_s;0)\geq0.\]
\end{corollary}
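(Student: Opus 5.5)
The plan is to read the identity in equation (\ref{OES4}) as a linear system $Bv=t$ for the vector $v=(v_s)_{s\succ 0}$ with $v_s:=\Delta u(s;a_s;0)$, and then apply Lemma \ref{L2}. The source term is $t_s=\theta\mathbbm{1}_{a_s=1}\geq 0$. The operator $B$ is obtained by moving every term of (\ref{OES4}) to the left. Its diagonal entry is $b_{ss}=\lambda+\theta\sum_k x_k+\mu$, corrected if a neighbouring state coincides with $s$. Its off-diagonal entries are $-\lambda$ at $[s,\lambda]$, $-\theta x_k$ at $[s,\theta_k]$ for $k\neq a_s$, $-\theta(x_{a_s}-1)$ at $[s,\theta_{a_s}]$, and $-\mu$ at $[s,a_s]$. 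Since each $v$ appearing on the right is evaluated with the SREPT action of that state, the terms should be regrouped so that every unknown has the form $v_{s'}=\Delta u(s';a_{s'};0)$. Two places need care. First, $a_s$ is unchanged at $[s,\lambda]$ and at $[s,\theta_k]$ for $k\neq a_s$. Second, when $x_{a_s}=1$ the term $\theta\,\Delta u([s,\theta_{a_s}];\cdot)$ is exactly what was removed, so the coefficient $x_{a_s}-1=0$ vanishes. Boundary terms involving states outside $S$ cancel, as the paper stipulates.

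First I check that $v\in\ell^\infty$. By Corollary \ref{BD}, $|u([s,a_s])-u(s)|\le 2$, so $|v_s|\le 3$ uniformly. The unknowns should be restricted to $s\succ0$ and $v_0$ treated separately: at $s=0$ there is no action, and the only transitions into $0$ come through $[s,1]$ or abandonment. Any term involving state $0$, or a state where $a$ is undefined, is moved into $t$, and I then check that it is nonnegative or cancels.

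Next I verify the hypotheses of Lemma \ref{L2}. Hypothesis (i) holds by the sign structure above. For hypothesis (ii), the row sum is
\[
d_s=(\lambda+\theta\textstyle\sum_k x_k+\mu)-\lambda-\theta\bigl(\sum_k x_k-1\bigr)-\mu=\theta>0,
\]
so $d=\theta$. This is exactly where the subtracted $\Delta u([s,\theta_{a_s}])$ term is essential, and it is why the result holds for every $\theta>0$. Hypothesis (iii) requires $\sum_{j>i}|b_{ij}|$ to be uniformly bounded, where "later" means later in the state ordering defined via $\omega$ and the lexicographic rule. Abandonment transitions lower $\omega$, so they point to earlier states. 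The arrival $[s,\lambda]$ raises $\omega$, so it is later, with weight $\lambda$. The service move $[s,a_s]$ for $a_s>1$ keeps $\omega$ fixed and increases $x_{a_s-1}$, which makes the state earlier under the rule "$x_m<y_m$ means $x\succ y$", so I need to double-check its direction. In every case the later entries have weight at most $\lambda+\mu$, so $\beta_0=\lambda+\mu+1$ works. Lemma \ref{L2} then gives $v\ge0$, which is the claim.

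The main obstacle is the bookkeeping that makes (\ref{OES4}) a genuine system in the single unknown family $v_s$. In particular, the term $\mu\,\Delta u([s,a_s];a_{[s,a_s]};0)$ has to equal $v_{[s,a_s]}$, including the case $a_s=1$ where the state loses a customer and $a_{[s,1]}$ may jump to $K$ or be undefined at $0$. The diagonal must also remain positive after any self-coincidences. I would handle the state $0$ by setting $v_0:=0$, or by absorbing its contribution into $t$, and check that the contribution keeps $t\ge0$ while the row sums stay at least $\theta$.
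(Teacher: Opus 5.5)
Your proposal is correct and is essentially the paper's proof: you cast (\ref{OES4}) as a linear system $B\,\Delta u=t$ with the same coefficients and $t_s=\theta\mathbbm{1}_{a_s=1}\ge 0$, get $\Delta u\in\ell^\infty$ from Corollary \ref{BD}, and conclude with Lemma \ref{L2}. Your extra checks are the verifications the paper leaves implicit: the row sums equal $\theta$, or $\theta+\mu$ when $[s,a_s]=0$, where $\Delta u(0;\cdot;0)=0$; and only the arrival and service entries can lie after the diagonal. Note that no neighbour ever coincides with $s$; the coincidence that does occur is $[s,1]=[s,\theta_1]$ when $a_s=1$, and there the two off-diagonal coefficients simply add without affecting any hypothesis of Lemma \ref{L2}.
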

\begin{proof}
    In equation (\ref{OES4}), for all $s,s^\prime\in S$, we define 
    \[b_{s,s}=\lambda+\theta\sum_{k=1}^Kx_k+\mu,\ b_{s,[s,\lambda]}=-\lambda,\ b_{s,[s,\theta_k]}=-\theta(x_k-\mathbbm{1}_{k=a_s})\text{ for }k=1,\dots,K,\]
    \[b_{s,[s,a_s]}=-\mu,\ t_s=\theta\mathbbm{1}_{a_s=1}\text{ and }b_{s,s^\prime}=0\text{ otherwise}.\]
    Then, we have
    \begin{eqnarray}\label{OES42}
    b_{s,s}\Delta u(s;a_s;0)+b_{s,[s,\lambda]} \Delta u([s,\lambda];a_{s};0)&&\nonumber\\
    +\sum_{k=1}^K b_{s,[s,\theta_k]} \Delta u([s,\theta_k];a_s;0)+b_{s,[s,a_s]}\Delta u([s,a_s];a_{[s,a_s]};0)&=&t_s.
\end{eqnarray}
Since $a_s=a_{[s,\lambda]}$ and for $k=1,\dots,K$ with $b_{s,[s,\theta_k]}<0$, $a_s=a_{[s,\theta_k]}$, we obtain a system of linear equations for the vector $\left(\Delta u(s;a_s;0)\right)_{s\succ 0\in S}$, satisfying
\[B\Delta u=t,\]
where $B=(b_{s,s^\prime})_{s,s^\prime\succ0\in S}$ and $t=(t_s)_{s\succ0\in S}$ satisfy conditions (i) - (iii) of Lemma \ref{L2}. Note that from Corollary \ref{BD}, we have $\Delta u\in \ell^\infty$, and $t$ is a non-negative vector. Therefore, from Lemma \ref{L2}, we know that $\Delta u(s;a_s;0)\geq0$ for all $s\succ0\in S$.
\end{proof}
Note that Corollary \ref{C1} shows that the server should work on the customer with $a_s$ remaining phases  rather than idling in all states $s\succ0\in S$ (not only for $s\succ0\in S^*$).

\subsubsection{Proof of $\Delta u(s;a_s;K)\geq0$}
 Without loss of generality, in this section we assume that $a_s<K$ and $x_K>0$, because otherwise either $\Delta u(s;K;K)=0$ (by definition) and the result holds trivially or there is only one customer in the system and there is nothing to prove.

In order to simplify the notation, for $s\succ0\in S,\ 0<i<j\leq K,\ i,j\in A(s)$, define 
\[\Delta^2 u(s;i;j):=\Delta u([s,\theta_{i}];0;j)+\Delta u([s,\theta_j];i;0).\]

\begin{lemma}
    For all $s\succ 0\in S^*$, \begin{eqnarray}\label{OES72}
    (\lambda+\theta(x_K+1)+\mu)\Delta u(s;a_s;K)&=&\lambda \Delta u([s,\lambda];a_s;K)+\theta\mathbbm{1}_{a_s=1}+\theta \Delta^2u(s;a_s;K)\nonumber\\
    &+&\theta(x_K-1)\Delta u([s,\theta_K];a_s;K)\nonumber\\
    &+&\mu \Delta u([s,a_s];a_{[s,a_s]};K).
\end{eqnarray}
\end{lemma}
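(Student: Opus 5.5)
The plan is to mirror the derivation of (\ref{OES4}). We write the Poisson equation (\ref{OES1}) of the SREPT policy at the two states $[s,a_s]$ and $[s,K]$, subtract the second from the first, and regroup the terms into differences of the form $\Delta u(\cdot\,;\cdot\,;\cdot)$.

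First we record the structure of $s\succ0\in S^*$ with $a_s<K$ and $x_K>0$. Then $x_{a_s}=1$, $x_k=0$ for all other $k<K$, and the total number of customers is $x_K+1$. This explains the coefficient $\lambda+\theta(x_K+1)+\mu$.

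\emph{State $[s,K]$.} This state has the same number of customers as $s$. SREPT still chooses $a_s$ there, including the case $a_s=K-1$, where $x_{K-1}$ becomes $2$. Hence (\ref{OES1}) at $[s,K]$ has the following terms:
\begin{itemize}
\item arrival term $\lambda u([[s,K],\lambda])$;
\item service term $\mu\big(u([[s,K],a_s])+\mathbbm{1}_{a_s=1}\big)$;
\item abandonment terms from $x_K-1$ customers with $K$ phases, one with $a_s$ phases and one with $K-1$ phases.
\end{itemize}

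\emph{State $[s,a_s]$, case $a_s>1$.} The equation has the form (\ref{OES2}). The abandonments come from $x_K$ customers with $K$ phases and one with $a_s-1$ phases, and the service term uses $a_{[s,a_s]}$.

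\emph{State $[s,a_s]$, case $a_s=1$.} There is one fewer customer. We apply the same shift used to obtain (\ref{OES3}): add $\theta u([s,1])$ and the constant $\lambda+\theta\sum_k x_k+\mu$ to both sides, working with $u([s,1])+1$. This produces the extra $\theta\mathbbm{1}_{a_s=1}$ term and makes the diagonal coefficients agree.

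Next we subtract and pair the terms, using the commutation identities $[[s,a_s],\lambda]=[[s,\lambda],a_s]$, $[[s,K],\lambda]=[[s,\lambda],K]$ and $[[s,K],a_s]=[[s,a_s],K]$.
\begin{itemize}
\item The arrival terms give $\lambda\Delta u([s,\lambda];a_s;K)$, because SREPT's action is unchanged by an arrival.
\item The service terms give $\mu\Delta u([s,a_s];a_{[s,a_s]};K)$. The indicator $\mathbbm{1}_{a_s=1}$ appears on both sides and cancels, once the shift in the $a_s=1$ case is accounted for. Action $K$ is admissible in $[s,a_s]$ because $x_K>0$.
\item For the abandonments, we match $x_K-1$ of the $K$-phase abandonments in $[s,a_s]$ with the $x_K-1$ in $[s,K]$. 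Using $[[s,a_s],\theta_K]=[[s,\theta_K],a_s]$ and $[[s,K],\theta_K]=[[s,\theta_K],K]$, they give $\theta(x_K-1)\Delta u([s,\theta_K];a_s;K)$.
\end{itemize}

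Four unmatched abandonment terms remain:
\begin{itemize}
\item from $[s,a_s]$: the leftover $K$-phase abandonment, landing in $[[s,a_s],\theta_K]=[[s,\theta_K],a_s]$, and the abandonment of the $(a_s-1)$-phase customer, landing in $[s,\theta_{a_s}]$;
\item from $[s,K]$: the abandonment of the $(K-1)$-phase customer, landing in $[s,\theta_K]$, and the abandonment of the $a_s$-phase customer, landing in $[[s,K],\theta_{a_s}]=[[s,\theta_{a_s}],K]$.
\end{itemize}
Pairing them crosswise, $u([[s,\theta_K],a_s])+\mathbbm{1}_{a_s=1}-u([s,\theta_K])=\Delta u([s,\theta_K];a_s;0)$ and $u([s,\theta_{a_s}])-u([[s,\theta_{a_s}],K])=\Delta u([s,\theta_{a_s}];0;K)$. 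Their sum is exactly $\Delta^2u(s;a_s;K)$.

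The main obstacle is the bookkeeping in the case $a_s=1$. There the $1$-phase customer in $[s,a_s]$ is absent, and the constant $+1$ shifts must be distributed so that each abandonment term carries the right indicator. I would follow the treatment of (\ref{OES3}) exactly and check that the leftover constant is precisely $\theta\mathbbm{1}_{a_s=1}$. A secondary point is boundary cases such as $x_K=1$ or $a_s=K-1$. There some terms involve states outside $S$, but these are kept formally and cancel, as per the convention stated earlier.
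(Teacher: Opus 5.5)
Your proposal is correct and follows the paper's proof essentially step for step. Like the paper, you write (\ref{OES1}) at $[s,K]$, subtract it from (\ref{OES2}) or from the shifted equation (\ref{OES3}), match the $x_K-1$ common $K$-phase abandonments, and pair the four leftover abandonment terms crosswise into $\Delta u([s,\theta_{a_s}];0;K)+\Delta u([s,\theta_K];a_s;0)=\Delta^2u(s;a_s;K)$.

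The $a_s=1$ bookkeeping you flag closes as you expect. The added $\theta\,(u([s,1])+1)$ term takes the place of the missing $(a_s-1)$-phase abandonment, since $[s,\theta_1]=[s,1]$, and its surplus $+1$ is exactly $\theta\mathbbm{1}_{a_s=1}$. The service terms cancel because $[[s,1],K]=[[s,K],1]$ and $a_{[s,1]}=K$.
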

\begin{proof}
Since $a_s<K$, in state $[s,K]$, SREPT policy serves the customer with $a_s$ remaining phases. As a result from (\ref{OES1}) with some algebra, we have
\begin{eqnarray}\label{OES4.5}
    (\lambda+\theta(x_K+1)+\mu)u([s,K])+g&=&\lambda u([[s,K],\lambda])\nonumber\\
    &+&\theta\bigg{(}(x_K-1) u([[s,K],\theta_K])+u([[s,K],\theta_{K-1}])+u([[s,K],\theta_{a_s}])\bigg{)}\nonumber\\
    &+&\mu (u([[s,K],a_s])+\mathbbm{1}_{a_s=1}).
\end{eqnarray}
Subtracting (\ref{OES4.5}) from (\ref{OES2}) (for $a_s>1$) or (\ref{OES3}) (for $a_s=1$), we have
\begin{eqnarray}\label{OES7}
    (\lambda+\theta(x_K+1)+\mu)\Delta u(s;a_s;K)&=&\lambda \Delta u([s,\lambda];a_s;K)\nonumber\\
    &+&\theta (x_K-1)\Delta u([s,\theta_k];a_s;K)\nonumber\\
    &+&\theta\left(u([[s,a_s],\theta_{a_s-1}])+u([[s,a_s],\theta_{K}])-u([[s,K],\theta_{K-1}])-u([[s,K],\theta_{a_s}])\right)\nonumber\\
    &+&\mu \Delta u([s,a_s];a_{[s,a_s]};K).
\end{eqnarray}
Next, we simplify the abandonment terms in the third line of equation (\ref{OES7}). For $a_s>1$, we have
\begin{eqnarray}
    &&u([[s,a_s],\theta_{a_s-1}])+u([[s,a_s],\theta_{K}])-u([[s,K],\theta_{K-1}])-u([[s,K],\theta_{a_s}])\nonumber\\
    =&&u([s,\theta_{a_s}])-u([[s,\theta_{a_s}],K])+u([[s,\theta_K],a_s])-u([s,\theta_K])\nonumber\\
    =&&\Delta([s,\theta_{a_s}];0;K)+\Delta([s,\theta_K];a_s;0)\nonumber\\
    =&&\Delta^2 u(s;a_s;K)\nonumber
\end{eqnarray}
where the first equality holds since $[[s,a_s],\theta_{a_s-1}]=[s,\theta_{a_s}]$, $[[s,K],\theta_{a_s}]=[[s,\theta_{a_s}],K]$ and $[[s,a_s],\theta_K]=[[s,\theta_K],a_s]$.

For $a_s=1$, we have
\begin{eqnarray}
    &&u([s,1])+u([[s,1],\theta_{K}])-u([[s,K],\theta_{K-1}])-u([[s,K],\theta_{1}])\nonumber\\
    =&& u([s,1])-u([[s,1],K])+u([[s,\theta_K],1])-u(s,\theta_K)\nonumber\\
    =&&\Delta u([s,\theta_1];0;K])+\Delta u([s,\theta_K];1;0)+1\nonumber\\
    =&&\Delta^2 u(s;1;K)+1\nonumber
\end{eqnarray}
where the first equality holds since $[[s,K],\theta_{1}]=[[s,1],K]$ and $[[s,1],\theta_K]=[[s,\theta_K],1]$.

As a result, we have,
\begin{eqnarray}(\lambda+\theta(x_K+1)+\mu)\Delta u(s;a_s;K)&=&\lambda \Delta u([s,\lambda];a_s;K)+\theta\mathbbm{1}_{a_s=1}+\theta \Delta^2u(s;a_s;K)\nonumber\\
    &+&\theta(x_K-1)\Delta u([s,\theta_K];a_s;K)\nonumber\\
    &+&\mu \Delta u([s,a_s];a_{[s,a_s]};K).\nonumber
\end{eqnarray}
\end{proof}
We again have a system of linear equations for $\Delta u(s;a_s;K),\ s\succ0\in S^*$ whose coefficients satisfy the conditions in Lemma \ref{L2} (i) - (iii). However, the constant vector in equation (\ref{OES72}) for $s\in S^*$ is $\theta\mathbbm{1}_{a_s=1}+\theta \Delta^2u(s;a_s;K)$, which is not necessarily non-negative. Therefore, we will use a different set of variables to prove the non-negativity of  $\Delta u(s;a_s;K)$.

 For $s\succ0\in S$, $0<i<j\leq K$, $i,j\in A(s)$, we define 
 \begin{equation}\label{D12}
    \delta u(s;i;K):=\Delta u(s;i;K)-q^{i}p_{i,K},
\end{equation}
\begin{equation}\label{D2}
    \delta^2 u(s;i;j):=\Delta ^2 u(s;i;j)+q^{i-1}(\frac{1}{K}-p_{i,j})
\end{equation}
and
\begin{equation}\label{D11}
\delta u(s;0;j):=\Delta u(s;0;j)+\frac{1}{K}+\frac{\theta}{\mu}-p_{0,j},
\end{equation}
where $q=\frac{\mu}{\mu+\theta}$ and $p_{i,j}=\frac{\theta}{\mu}\frac{j-i-1}{K}$. In what follows, our objective is to show that $\delta u(s; a_s; K) \geq 0$, which immediately implies the non-negativity of $\Delta u(s; a_s; K)$. However, as will become apparent in the derivation, establishing $\delta u(s; a_s; K) \geq 0$ requires proving the non-negativity of $\delta^2 u(s; a_s; j)$ for all $0 < j \leq K$, as well as the non-negativity of $\delta u(s; 0; a_s)$. We start with  the following lemma.
 \begin{lemma}\label{L33}
 For all $s\succ0\in S^*$,
     \begin{eqnarray}\label{OES90}(\lambda+\theta(x_K+1)+\mu)\delta u(s;a_s;K)&=&\lambda \delta u([s,\lambda];a_s;K)+\theta \delta^2u(s;a_s;K)+t^\prime_s\nonumber\\
    &+&\theta(x_K-1)\delta u([s,\theta_K];a_s;K)\nonumber\\
    &+&\mu \delta u([s,a_s];a_{[s,a_s]};K)
\end{eqnarray}
where 
for $a_s>1$,
\[t^\prime_s:=-(\mu+2\theta)q^{a_s}p_{a_s,K}-\theta q^{a_s-1}(\frac{1}{K}-p_{a_s,K})+\mu q^{a_s-1}p_{a_s-1,K}\geq0,  \]
and for $a_s=1$,
\[t^\prime_s:=-(\mu+2\theta) qp_{1,K}-\theta (\frac{1}{K}-p_{1,K})+\theta\geq0.\]
 \end{lemma}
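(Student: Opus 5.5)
The plan is to obtain (\ref{OES90}) by substituting the definitions (\ref{D12}) and (\ref{D2}) into the already established identity (\ref{OES72}) and collecting all the constants into $t^\prime_s$. We then check $t^\prime_s\geq0$ by direct algebra in the two cases $a_s>1$ and $a_s=1$. No new probabilistic argument is needed; Lemma \ref{L33} is a bookkeeping consequence of the previous lemma.

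\textbf{Step 1: substitute.} In (\ref{OES72}), replace each first difference by $\delta u(\cdot)+q^{(\cdot)}p_{(\cdot),K}$, using (\ref{D12}), and replace $\Delta^2u(s;a_s;K)$ by $\delta^2u(s;a_s;K)-q^{a_s-1}(\frac{1}{K}-p_{a_s,K})$, using (\ref{D2}). The states $[s,\lambda]$ and $[s,\theta_K]$ have the same SREPT action $a_s$ (they only change $x_K$), so their correction is also $q^{a_s}p_{a_s,K}$.
\begin{itemize}
\item The $\lambda$-contributions cancel against the $\lambda$ part of the left-hand side.
\item The $\theta(x_K-1)$ contribution leaves a net $-2\theta q^{a_s}p_{a_s,K}$ against $\theta(x_K+1)$ on the left.
\item The $\mu$ part of the left-hand side gives $-\mu q^{a_s}p_{a_s,K}$.
\end{itemize}
For the $\mu$-transition, take $s\in S^*$ with $a_s>1$. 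Then $[s,a_s]$ has the single non-$K$ customer at $a_s-1$ phases, so $a_{[s,a_s]}=a_s-1$ and the correction is $+\mu q^{a_s-1}p_{a_s-1,K}$. For $a_s=1$, the state $[s,1]$ has only $K$-phase customers. Hence $\Delta u([s,1];K;K)=0$, and we adopt the convention $\delta u(\cdot;K;K)=0$, so no $\mu$-constant arises. Adding the $\theta$-term from $\Delta^2$ and $\theta\mathbbm{1}_{a_s=1}$ reproduces exactly the two stated formulas for $t^\prime_s$.

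\textbf{Step 2: sign of $t^\prime_s$ when $a_s=i>1$.} Use $p_{i-1,K}=p_{i,K}+\frac{\theta}{\mu K}$ and $(\mu+2\theta)q^{i}=q^{i-1}\frac{\mu(\mu+2\theta)}{\mu+\theta}$. The $\frac{\theta}{K}q^{i-1}$ terms cancel, leaving
\[t^\prime_s=q^{i-1}p_{i,K}\Big(\mu+\theta-\frac{\mu(\mu+2\theta)}{\mu+\theta}\Big)=q^{i-1}p_{i,K}\frac{\theta^2}{\mu+\theta}\geq0,\]
since $p_{i,K}\geq0$ for $i<K$.

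\textbf{Step 3: sign of $t^\prime_s$ when $a_s=1$.} Here
\[t^\prime_s=\theta\Big(1-\frac1K\Big)-p_{1,K}\,\frac{\mu^2+\mu\theta-\theta^2}{\mu+\theta}.\]
Since $\frac{\mu^2+\mu\theta-\theta^2}{\mu+\theta}\leq\mu$ and $\mu p_{1,K}=\theta\frac{K-2}{K}$, the subtracted term is at most $\theta\frac{K-2}{K}<\theta\frac{K-1}{K}$. Hence $t^\prime_s>0$.

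The main obstacle is Step 1 rather than the inequalities. One must carefully track which SREPT action applies in each neighbouring state, which is where restricting to $S^*$ is used. The boundary case $a_s=1$ (where $[s,1]$ reverts to serving a $K$-phase customer) must also be handled consistently with the conventions for $\delta u(\cdot;K;K)$ and for transitions leaving the state space.
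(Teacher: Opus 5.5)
Your proposal is correct and follows the paper's proof: substitute (\ref{D12}) and (\ref{D2}) into (\ref{OES72}), collect the constants into $t'_s$, and check the sign of $t'_s$ by elementary algebra. Your exact value $t'_s=q^{a_s-1}p_{a_s,K}\,\theta^2/(\mu+\theta)$ for $a_s>1$, and your explicit convention $\delta u([s,1];K;K)=0$ for $a_s=1$, are slightly cleaner versions of the paper's bounding steps; the paper uses that convention only implicitly, since (\ref{D12}) is defined only for $i<K$.
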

\begin{proof}
Writing $\Delta u$ in terms of $\delta u$ (using (\ref{D12})), $\Delta^2 u$ in terms of $\delta^2 u$ (using (\ref{D2})) and plugging into equation (\ref{OES72}) with the definition of $t^\prime_s$, we get equation (\ref{OES90}). Furthermore, for $a_s>1$,
\begin{eqnarray}
    t^\prime_s&:=&-(\mu+2\theta)q^{a_s}p_{a_s,K}-\theta q^{a_s-1}(\frac{1}{K}-p_{a_s,K})+\mu q^{a_s-1}p_{a_s-1,K}\nonumber\\
    &=&\left(\mu q^{a_s-1}p_{a_s-1,K}-(\mu+\theta)q^{a_s}p_{a_s,K}-\theta q^{a_s-1}\frac{1}{K}\right)+\theta q^{a_s-1}p_{a_s-1,K}-\theta q^{a_s}p_{a_s-1,K}\nonumber\\
    &\geq&\mu q^{a_s-1}p_{a_s-1,K}-(\mu+\theta)q^{a_s}p_{a_s,K}-\theta q^{a_s-1}\frac{1}{K}\nonumber\\
    &=&\mu q^{a_s-1}p_{a_s-1,K}-\mu q^{a_s-1}p_{a_s,K}-\mu q^{a_s-1}\frac{\theta}{\mu}\frac{1}{K}\nonumber\\
    &=&\mu q^{a_s-1}\left( p_{a_s-1,K}-p_{a_s,K}-\frac{\theta}{\mu}\frac{1}{K} \right )=0.\nonumber
\end{eqnarray}
Similarly, for $a_s=1$,
\begin{eqnarray}
    t^\prime_s&:=&-(\mu+2\theta) qp_{1,K}-\theta (\frac{1}{K}-p_{1,K})+\theta\nonumber\\
    &\geq& \theta-(\mu+\theta)qp_{1,K}-\theta\frac{1}{K}\nonumber\\
    &=&\frac{\theta}{K}\geq0.\nonumber
\end{eqnarray}
\end{proof}
Equation (\ref{OES90}) has the same coefficients as equation (\ref{OES72}) but a different variable $\delta u(s;a_s;K)$. Next, since $t_s^\prime\geq0$, we will prove that for all $s\succ0\in S^*$, $\delta^2 u(s;a_s;K)\geq0$ which will imply that the constant vector $t^\prime_s+\theta\delta^2 u(s;a_s;K)$ is non-negative.

In order to show that $\delta^2 u(s;a_s;K)$ is non-negative, we first show that $\delta u(s;0;a_s)$ is non-negative, i.e. $\Delta u(s;0;a_s)$ has a lower bound.

\begin{lemma}\label{L34}
For $s\succ0\in S^*$,
\begin{eqnarray}\label{OES6}
    (\lambda+\theta(x_K+1)+\mu)\delta u(s;0;a_s)&=&\lambda \delta u([s,\lambda];0;a_s)+\widetilde{t}_s\nonumber\\
    &+&\theta x_K \delta u([s,\theta_K];0;a_s)\nonumber\\
    &+&\mu \delta u([s,a_s];0;a_{[s,a_s]})
\end{eqnarray}
where for $a_s>1$
\[\widetilde{t}_s:=(\theta+\mu)(\frac{1}{K}+\frac{\theta}{\mu}-p_{0,a_s})-\mu (\frac{1}{K}+\frac{\theta}{\mu}-p_{0,a_s-1})\geq0,\]
and for $a_s=1$,
\[\widetilde{t}_s:=(\theta+\mu)(\frac{1}{K}+\frac{\theta}{\mu})-\mu (\frac{1}{K}+\frac{\theta}{\mu}-p_{0,K})-\theta\geq0.\]
\end{lemma}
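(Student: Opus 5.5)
The plan is to obtain (\ref{OES6}) directly from the already established equation (\ref{OES4}), specialized to the recurrent class $S^*$, and then to shift the variable by a constant depending only on $a_s$. First I will record the identity $\Delta u(s;0;a_s)=-\Delta u(s;a_s;0)$, which is immediate from the definition of $\Delta u(s;i;j)$. Next I will specialize (\ref{OES4}) to $s\succ0\in S^*$, assuming as in this subsection that $a_s<K$. In that case the only customer with fewer than $K$ remaining phases is the one with $a_s$ phases, so $x_{a_s}=1$ and $x_k=0$ for every other $k<K$. Hence $\sum_k x_k=x_K+1$. Moreover, the abandonment bracket $\sum_k x_k\Delta u([s,\theta_k];a_s;0)-\Delta u([s,\theta_{a_s}];a_s;0)$ collapses to $x_K\,\Delta u([s,\theta_K];a_s;0)$. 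Multiplying by $-1$ then gives
\[(\lambda+\theta(x_K+1)+\mu)\Delta u(s;0;a_s)=\lambda\Delta u([s,\lambda];0;a_s)-\theta\mathbbm{1}_{a_s=1}+\theta x_K\Delta u([s,\theta_K];0;a_s)+\mu\Delta u([s,a_s];0;a_{[s,a_s]}).\]
Here I use that $a_{[s,\lambda]}=a_{[s,\theta_K]}=a_s$ (arrivals and abandonments of $K$-phase customers do not change the SREPT action). Any term whose target state leaves the state space is handled by the paper's cancellation convention.

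Second, I will substitute $\Delta u(s;0;j)=\delta u(s;0;j)-c_j$, where $c_j:=\frac1K+\frac{\theta}{\mu}-p_{0,j}$ by (\ref{D11}). Since the arrival and abandonment terms keep the same index $a_s$, the constants collect to
\[\widetilde t_s=(\theta+\mu)c_{a_s}-\mu c_{a_{[s,a_s]}}-\theta\mathbbm{1}_{a_s=1}.\]
It remains to identify $a_{[s,a_s]}$. For $a_s>1$ the served customer moves to $a_s-1$ phases, so $a_{[s,a_s]}=a_s-1$; this gives exactly the stated formula. For $a_s=1$ the customer departs, and the next customer served has $K$ phases, so $a_{[s,1]}=K$; again this matches the stated formula. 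The degenerate case $[s,1]=0$ can occur only when $x_K=0$, and then it is absorbed by the convention.

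Third, I will check nonnegativity using $p_{0,j}=\frac{\theta}{\mu}\frac{j-1}{K}$, which gives $c_a-c_{a-1}=-\frac{\theta}{\mu K}$.
\begin{itemize}
\item For $a_s>1$: $\widetilde t_s=\theta c_{a_s}+\mu(c_{a_s}-c_{a_s-1})=\theta c_{a_s}-\frac{\theta}{K}=\frac{\theta^2}{\mu}\bigl(1-\frac{a_s-1}{K}\bigr)\ge0$.
\item For $a_s=1$: $\widetilde t_s=\theta c_1+\mu(c_1-c_K)-\theta=\frac{\theta}{K}+\frac{\theta^2}{\mu}+\frac{\theta(K-1)}{K}-\theta=\frac{\theta^2}{\mu}\ge0$.
\end{itemize}

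The main obstacle is the bookkeeping rather than any estimate. One must verify that within $S^*$ the $\theta_{a_s}$ abandonment term really cancels, and that the SREPT action indices at the neighbouring states are as claimed. One must also handle the boundary states (the empty system and $s$ with $x_K=0$) consistently with the cancellation convention.
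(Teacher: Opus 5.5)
Your proposal is correct and follows the paper's route. You specialize (\ref{OES4}) to $S^*$, where $x_{a_s}=1$ cancels the $\theta_{a_s}$ abandonment term (implicitly using $a_s<K$, exactly as the paper does), change sign, shift by the constants in (\ref{D11}), and check $\widetilde t_s\geq0$. The only difference is cosmetic: for $a_s>1$ you compute $\widetilde t_s=\frac{\theta^2}{\mu}\bigl(1-\frac{a_s-1}{K}\bigr)$ exactly, whereas the paper drops the term $\theta(\frac{\theta}{\mu}-p_{0,a_s})$ and only bounds $\widetilde t_s$ below by $0$.
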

\begin{proof}
Changing the sign on both sides of equation (\ref{OES4}), with some simplification, we have for $s\in S^*$,
\begin{eqnarray}\label{OES5}
    (\lambda+\theta(x_K+1)+\mu)\Delta u(s;0;a_s)&=&\lambda \Delta u([s,\lambda];0;a_s)-\theta\mathbbm{1}_{a_s=1}\nonumber\\
    &+&\theta x_K \Delta u([s,\theta_K];0;a_s)\nonumber\\
    &+&\mu \Delta u([s,a_s];0;a_{[s,a_s]}).
\end{eqnarray} 
Writing $\Delta u$ in terms of $\delta u$ (using (\ref{D11})), plugging into equation (\ref{OES6}), and using the definition of $\widetilde{t}_s$, we get equation (\ref{OES5}). Furthermore, for $a_s>1$,
\begin{eqnarray}
    \widetilde{t}_s&:=&(\theta+\mu)(\frac{1}{K}+\frac{\theta}{\mu}-p_{0,a_s})-\mu (\frac{1}{K}+\frac{\theta}{\mu}-p_{0,a_s-1})\nonumber\\
    &=& \mu (\frac{1}{K}+\frac{\theta}{\mu}-p_{0,a_s})-\mu (\frac{1}{K}+\frac{\theta}{\mu}-p_{0,a_s-1})+\theta (\frac{1}{K}+\frac{\theta}{\mu}-p_{0,a_s})\nonumber\\
    &\geq &\mu (p_{0,a_s-1}-p_{0,a_s})+\theta\frac{1}{K}=0.\nonumber
\end{eqnarray}
Similarly for $a_s=1$,
\begin{eqnarray}
    \widetilde{t}_s&:=&(\theta+\mu)(\frac{1}{K}+\frac{\theta}{\mu})-\mu (\frac{1}{K}+\frac{\theta}{\mu}-p_{0,K})-\theta\nonumber\\
    &=&\theta(\frac{1}{K}+\frac{\theta}{\mu})+\mu p_{0,K}-\theta\nonumber\\
    &=&\theta\frac{\theta}{\mu}\geq0.\nonumber
\end{eqnarray}
\end{proof}
Equation (\ref{OES5}) forms a system of linear equations for the vector $\{\delta u(s;0;a_s)\}_{s\succ0\in S^*}$. The following corollary whose proof is given in the Appendix states that $\delta u(s;0;a_s)\geq0$ for all $s\in S^*$.
\begin{corollary}\label{C2}
    For $s\succ0\in S^*$, we have
    \[\delta u(s;0;a_s)\geq0.\]
\end{corollary}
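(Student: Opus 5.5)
The plan is to apply Lemma \ref{L2} to the linear system (\ref{OES6}) of Lemma \ref{L34}, viewed as a system for the vector $v=(v_s)_{s\succ0\in S^*}$ with $v_s:=\delta u(s;0;a_s)$. The argument mirrors the proof of Corollary \ref{C1}, with the state space restricted to the recurrent class $S^*$.

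\textbf{Setting up the system.} For $s\succ0\in S^*$ we set
\[b_{s,s}=\lambda+\theta(x_K+1)+\mu,\quad b_{s,[s,\lambda]}=-\lambda,\quad b_{s,[s,\theta_K]}=-\theta x_K,\quad b_{s,[s,a_s]}=-\mu,\]
with $t_s=\widetilde t_s$ and all other entries equal to zero. We must then check that every state appearing on the right-hand side of (\ref{OES6}) lies in $S^*$ and carries the correct index. Arrivals and abandonments of customers with $K$ remaining phases do not change the unique partially served customer, so $a_{[s,\lambda]}=a_s$ and $a_{[s,\theta_K]}=a_s$. The latter needs $x_K>0$; when $x_K=0$ the coefficient vanishes. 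If $a_s=K$, the convention $[s,\theta_0]=s$ and the trivial cases must be handled separately. Serving the customer with $a_s$ phases leads to $[s,a_s]\in S^*$, and the corresponding term is exactly $v_{[s,a_s]}$. One boundary case needs care: when $[s,a_s]=0$, for instance $s=(1,0,\dots,0)$, the term $\mu\,\delta u(0;\cdot)$ must be dropped by the paper's convention, or treated as zero, so that the equation still fits the framework. Under that convention it only removes an off-diagonal entry, which increases $d_s$.

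\textbf{Checking the conditions of Lemma \ref{L2}.} Condition (i) is immediate. For (ii), the row sum is $d_s=\theta\ge d:=\theta>0$, since $\lambda$, $\theta x_K$ and $\mu$ cancel. For (iii), the order $\succ$ ranks states first by $\omega$. Under this order, $[s,\theta_K]$ and $[s,1]$ precede $s$, and $[s,a_s]$ with $a_s>1$ has the same $\omega$ as $s$ but larger earlier coordinates, so it also precedes $s$. Only $[s,\lambda]$ lies after $s$. Hence $\sum_{j>i}|b_{ij}|\le\lambda<\beta_0:=\lambda+1$.

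\textbf{Boundedness and conclusion.} Next, $v\in\ell^\infty$: by Corollary \ref{BD}, $|\Delta u(s;0;a_s)|\le 2+1$ uniformly, and the shift $\frac1K+\frac\theta\mu-p_{0,a_s}$ is a bounded constant. Finally, $\widetilde t_s\ge0$ by Lemma \ref{L34}. Lemma \ref{L2} then yields $v\ge0$, which is the claim.

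The main obstacle is the bookkeeping. We must confirm that the restriction to $S^*$ is closed, i.e.\ that no transition in (\ref{OES6}) leaves $S^*$ or changes the index $a$ inconsistently. We must also confirm that the boundary states (empty queue after service, $x_K=0$, $a_s=K$) still fit the system. If some boundary term cannot be absorbed, the first thing to try is to move it into $t_s$ as a known nonnegative constant.
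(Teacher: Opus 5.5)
Your proposal is correct and is essentially the paper's own proof: the same matrix $\widetilde B$ (diagonal $\lambda+\theta(x_K+1)+\mu$, off-diagonal entries $-\lambda$, $-\theta x_K$, $-\mu$), the same observation $a_{[s,\lambda]}=a_{[s,\theta_K]}=a_s$, and an application of Lemma \ref{L2}, with boundedness from Corollary \ref{BD} and $\widetilde t_s\ge 0$ from Lemma \ref{L34}. You also check things the paper only asserts: the row sums $d_s=\theta$, condition (iii) via the ordering (only $[s,\lambda]$ follows $s$), and the boundary state $s=(1,0,\dots,0)$, where the true constant is $(\theta+\mu)(\frac1K+\frac\theta\mu)-\theta\ge 0$, so absorbing that term into $t_s$ as you suggest does work.
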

Next, we obtain a system of linear equations for vector $\delta^2u(s;a_s;j)$. From the definition of $\delta^2 u(s;a_s;j)$ and $\Delta^2 u(s;a_s;j)$, we know that for $j\in A(s)$,

\[\delta^2 u(s;a_s;j)=\Delta u([s,\theta_{a_s}];0;j)+\Delta u([s,\theta_j];a_s;0)+q^{i-1}(\frac{1}{K}-p_{i,j}).\]

Note that if $[s,\theta_{a_s}]$ and $[s,\theta_j]\in S^*$, then $s$ may not be necessarily in $S^*$. Thus, we need to consider a larger set of states which we define as  
\[S^\prime:=\{s\in S|\sum_{k=1}^{K-1}x_k\leq 2\}.\]
We next obtain a system of equations for $\{\delta^2(s;a_s;j) \}_{s\succ0\in S^\prime}$.

\begin{lemma}\label{L35}
For $s\succ0\in S^\prime$, $j=a_{[s,\theta_{a_s}]}$, if $a_s>1$,
\begin{eqnarray}\label{OES91}
    (\lambda+\theta (x_K+\mathbbm{1}_{j<K})+\mu)\delta^2 u(s;a_s;j)&=&\lambda\delta^2 u([s,\lambda];a_s;j)\nonumber\\
    &+&\theta(x_K-\mathbbm{1}_{j=K})\delta^2 u([s,\theta_K];a_s;j)\nonumber\\
    &+&\mu\delta^2 u([s,j,a_s];a_s-1;j-1),
\end{eqnarray}
and if $a_s=1$,
\begin{eqnarray}\label{OES92}
    (\lambda+\theta (x_K+\mathbbm{1}_{j<K})+\mu)\delta^2 u(s;a_s;j)&=&\lambda\delta^2 u([s,\lambda];a_s;j)+\widehat{t}_s\nonumber\\
    &+&\theta(x_K-\mathbbm{1}_{j=K})\delta^2 u([s,\theta_K];a_s;j)
\end{eqnarray}
where
\[\widehat{t}_s:=\mu\Delta u([[s,\theta_j],a_s];a_{[[s,\theta_j],a_s]};0)+\mu\delta u([[s,\theta_{a_s}],j];0;j-1)\geq0.\]

\end{lemma}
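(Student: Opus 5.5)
The plan is to derive (\ref{OES91})--(\ref{OES92}) exactly as (\ref{OES4}) and (\ref{OES72}) were derived. Since $\Delta^2 u(s;a_s;j)=\Delta u([s,\theta_{a_s}];0;j)+\Delta u([s,\theta_j];a_s;0)$ is a signed combination of four bias values, I write the Poisson equation (\ref{OES1}) at each of the four states $[[s,\theta_{a_s}],j]$, $[s,\theta_{a_s}]$, $[s,\theta_j]$ and $[[s,\theta_j],a_s]$, and then take the alternating sum. Two facts make this workable. First, $j=a_{[s,\theta_{a_s}]}$ is the SREPT action in $[s,\theta_{a_s}]$. Second, for $s\in S'$ with $a_s<j$, each of these states has at most one customer with fewer than $K$ remaining phases, or else the relevant customer is the one SREPT serves. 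So the SREPT action in each state is explicit: $a_s$ in $[s,\theta_j]$, and $a_s$ (resp. $j-1$ or its successor) in $[[s,\theta_j],a_s]$ and $[[s,\theta_{a_s}],j]$.

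Step 1: put all four equations on a common diagonal coefficient $\lambda+\theta(x_K+\mathbbm{1}_{j<K})+\mu$. The states $[s,\theta_{a_s}]$ and $[s,\theta_j]$ have one customer fewer than $s$, while the served-phase states may have one customer fewer again when the action is $1$. I handle this as in (\ref{OES3}): add $\theta u(\cdot)$ to both sides and shift by the constant $1$ when a completion occurs.

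Step 2: group the arrival terms. Since $[\,\cdot\,,\lambda]$ commutes with $\theta_k$ and with service moves and leaves $a_s$ and $j$ unchanged, they combine into $\lambda\Delta^2u([s,\lambda];a_s;j)$. The $\theta_K$-abandonment terms combine into $\theta(x_K-\mathbbm{1}_{j=K})\Delta^2u([s,\theta_K];a_s;j)$; when $j=K$, one $K$-phase customer has already been removed by $\theta_j$, which produces the indicator. The abandonments of the customers with $a_s$ or $j$ remaining phases produce terms that telescope. This is the analogue of the identity $[[s,a_s],\theta_{a_s-1}]=[s,\theta_{a_s}]$ used for (\ref{OES72}), and it is exactly why the diagonal carries $\mathbbm{1}_{j<K}$.

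Step 3: handle the service terms. If $a_s>1$, serving moves $a_s\to a_s-1$ in the two states where $a_s$ is served and $j\to j-1$ in the two where $j$ is served. The four service terms then reassemble into $\mu\Delta^2u([s,j,a_s];a_s-1;j-1)$, with $[s,j,a_s]$ denoting both moves applied, and it has the same structure. If $a_s=1$, service completes a customer instead, and the service terms no longer close up into a $\Delta^2$ term. They leave $\mu\Delta u([[s,\theta_j],a_s];a_{[[s,\theta_j],a_s]};0)$ plus $\mu$ times $\Delta u([[s,\theta_{a_s}],j];0;j-1)$, and these form $\widehat t_s$ once the shifts are included.

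Step 4: convert from $\Delta$ to $\delta$ using (\ref{D2}) and (\ref{D11}). The constants must cancel exactly when $a_s>1$; this relies on $q^{a_s-1}(\mu+\theta)=\mu q^{a_s-2}$, i.e. $q=\mu/(\mu+\theta)$, together with $p_{a_s-1,j-1}=p_{a_s,j}$. When $a_s=1$, the leftover constants should be precisely the $\frac1K+\frac\theta\mu-p_{0,j-1}$ absorbed into $\delta u(\cdot;0;j-1)$. Nonnegativity of $\widehat t_s$ then follows from Corollary \ref{C1}, applied to the first term (valid on all of $S$), and Corollary \ref{C2}, applied to the second term, since $[[s,\theta_{1}],j]\in S^*$ and $j-1=a_{[[s,\theta_1],j]}$.

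The main obstacle is the bookkeeping in Steps 2--4. The difficult part is tracking the boundary cases $j=K$, $j=a_s+1$, and completions that reduce the customer count. It also requires checking that the constants really cancel rather than leaving a negative residual. I would first verify the constant cancellation symbolically in the generic case $1<a_s<j<K$, and then treat $a_s=1$ and $j=K$ separately.
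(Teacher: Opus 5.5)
\textbf{Where the proposal breaks.} Your route is the paper's. Taking the alternating sum of (\ref{OES1}) over the four states is the same computation as the paper's difference of (\ref{OES4}) at $[s,\theta_j]$ and $[s,\theta_{a_s}]$, which gives (\ref{OES83}). Your constant check for $a_s>1$ is also correct.

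The step that fails is the Step~4 claim that, for $a_s=1$, the leftover constants are exactly the $\frac1K+\frac\theta\mu-p_{0,j-1}$ absorbed into $\delta u(\cdot;0;j-1)$. Do the bookkeeping:
\begin{itemize}
\item Substituting $\Delta^2u=\delta^2u-(\frac1K-p_{1,j})$ into the three $\Delta^2$ terms of (\ref{OES83}) leaves $(\theta+\mu)(\frac1K-p_{1,j})$ on the right, because $\lambda+\theta(x_K+\mathbbm{1}_{j<K})+\mu-\lambda-\theta(x_K-\mathbbm{1}_{j=K})=\theta+\mu$.
\item The completion term adds $\theta\mathbbm{1}_{a_s=1}=\theta$.
\item Rewriting via (\ref{D11}) absorbs only $\mu(\frac1K+\frac\theta\mu-p_{0,j-1})$.
\end{itemize}
Since $p_{0,j-1}=p_{1,j}=\frac\theta\mu\frac{j-2}{K}$, what remains is
\[
(\theta+\mu)\Big(\frac1K-p_{1,j}\Big)+\theta-\frac{\mu}{K}-\theta+\mu p_{1,j}=\theta\Big(\frac1K-p_{1,j}\Big)=\frac{\theta}{K}\Big(1-\frac{\theta}{\mu}(j-2)\Big).
\]
This vanishes only when $\theta(j-2)=\mu$. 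Because (\ref{OES83}) follows exactly from the Poisson equations, the identity that actually holds is (\ref{OES92}) with $\widehat t_s$ replaced by $\widehat t_s+\theta(\frac1K-p_{1,j})$. For $K=2$ you can confirm the extra $\theta/2$ directly from (\ref{OES1}) at $(0,m+1)$, $(0,m)$ and $(1,m)$.

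\textbf{Consequences.} Your plan cannot reach (\ref{OES92}) as displayed, and the paper's proof has the same hole: it handles $a_s=1$ with ``similarly'' and never does this constant bookkeeping.
\begin{itemize}
\item For $j=2$ the residual is $\theta/K>0$ and does no harm downstream.
\item Once $\theta(j-2)>\mu$ the residual is negative. Then the nonnegativity of $\widehat t_s$, which you correctly derive from Corollaries~\ref{C1} and~\ref{C2}, no longer gives a nonnegative forcing term for Lemma~\ref{L2}.
\end{itemize}
The problem is structural, not cosmetic. By Lemma~\ref{L11} and Corollary~\ref{BD}, $\Delta^2u(s;1;K)\le 3$ for every $\theta$. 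On the other hand, $\delta^2u(s;1;K)=\Delta^2u(s;1;K)+\frac1K(1-\frac\theta\mu(K-2))$. Hence for $K\ge3$ and $\theta/\mu>\frac{3K+1}{K-2}$ we get $\delta^2u(s;1;K)<0$ at $s=(1,0,\dots,0,x_K)$. The normalization (\ref{D2}) would have to be changed before this lemma could support Corollary~\ref{C3}.
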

\begin{proof}
    Note that if $j=0$, then there is only one customer in the system in state $s$, in which case we know that it is optimal to serve that customer. So, assume $j>0$, then for $s\in S^\prime$, we have $[s,\theta_j]\in S^*$. Furthermore, in equation (\ref{OES4}), for a state $s$  in $S^*$,  $x_k=0$ for $k\neq a_s$ and $k\neq K$. Furthermore,  $x_{a_s}=1$, which cancels with the negative term. As a result, we have

\begin{eqnarray}\label{OES61}
    (\lambda+\theta(x_K+\mathbbm{1}_{j<K})+\mu)\Delta u([s,\theta_j];a_s;0)&=&\lambda \Delta u([[s,\theta_j],\lambda];a_s;0)+\theta\mathbbm{1}_{a_s=1}\nonumber\\
    &+&\theta (x_K-\mathbbm{1}_{j=K}) \Delta u([[s,\theta_j],\theta_K];a_s;0)\nonumber\\
    &+&\mu \Delta u([[s,\theta_j],a_s];a_{[[s,\theta_j],a_s]};0).
\end{eqnarray}
Similarly, for the state $[s,\theta_{a_s}]\in S^*$, since $j=a_{[s,\theta_{a_s}]}$, we can again apply equation (\ref{OES4}) and obtain
\begin{eqnarray}\label{OES62}
    (\lambda+\theta(x_K+\mathbbm{1}_{j<K})+\mu)\Delta u([s,\theta_{a_s}];j;0)&=&\lambda \Delta u([[s,\theta_{a_s}],\lambda];j;0)\nonumber\\
    &+&\theta (x_K-\mathbbm{1}_{j=K}) \Delta u([[s,\theta_{a_s}],\theta_K];j;0)\nonumber\\
    &+&\mu \Delta u([[s,\theta_{a_s}],j];j-1;0).
\end{eqnarray}
Note that for $s\in S^\prime$, under the SREPT policy, $j-1=a_{[[s,\theta_{a_s}],j]}$. Subtracting (\ref{OES62}) from (\ref{OES61}) and noting that $\Delta u(s;i;j)=-\Delta u(s;j;i)$, we have
\begin{eqnarray}\label{OES83}
    (\lambda+\theta (x_K+\mathbbm{1}_{j<K})+\mu)\Delta^2 u(s;a_s;j)&=&\lambda\Delta^2 u([s,\lambda];a_s;j)\nonumber+\theta\mathbbm{1}_{a_s=1}\\
    &+&\theta(x_K-\mathbbm{1}_{j=K})\Delta^2 u([s,\theta_K];a_s;j)\nonumber\\
    &+&\mu\Delta u([[s,\theta_j],a_s];a_{[s,\theta_j,a_s]};0)\nonumber\\
    &+&\mu\Delta u([[s,\theta_{a_s}],j];0;j-1).
\end{eqnarray}
Furthermore, for $a_s>1$,
\begin{eqnarray}
    &&\Delta u([[s,\theta_j],a_s];a_{[[s,\theta_j],a_s]};0)+\Delta u([[s,\theta_{a_s}],j];0;j-1)\nonumber\\
    &=&\Delta u([[s,\theta_j],a_s];a_s-1;0)+\Delta u([[s,\theta_{a_s}],j];0;j-1)\nonumber\\
    &=&\Delta u([[[s,j],a_s],\theta_{j-1}];a_s-1;0)+\Delta u([[[s,j],a_s],\theta_{a_s-1}];0;j-1)\nonumber\\
    &=&\Delta^2 u([[s,j],a_s];a_s-1;j-1).\nonumber
\end{eqnarray}
Therefore, we have for $a_s>1$,
\begin{eqnarray}\label{OES84}
    (\lambda+\theta (x_K+\mathbbm{1}_{j<K})+\mu)\Delta^2 u(s;a_s;j)&=&\lambda\Delta^2 u([s,\lambda];a_s;j)\nonumber\\
    &+&\theta(x_K-\mathbbm{1}_{j=K})\Delta^2 u([s,\theta_K];a_s;j)\nonumber\\
    &+&\mu\Delta^2 u([s,j,a_s];a_s-1;j-1).
\end{eqnarray}

Writing $\Delta^2 u$ in terms of $\delta^2 u$ (using (\ref{D2})) and plugging into equation (\ref{OES84}), we get equation (\ref{OES91}).

Similarly, for $a_s=1$, writing $\Delta^2 u$ in terms of $\delta^2 u$ (using (\ref{D2})), plugging into equation (\ref{OES83}) together and using the definition of $\widehat{t}_s$, we get equation (\ref{OES92}).

Note that from Corollaries \ref{C1} and \ref{C2} we have $\widehat{t}_s\geq0$.
\end{proof}

The following corollary whose proof is given in the Appendix states that $\delta^2 u$ is nonnegative.

\begin{corollary}\label{C3}
    For $s\succ0\in S^\prime$, $a_s<j\leq K$, $j\in A(s)$, we have
    \[\delta^2 u(s;a_s;j)\geq0.\]
\end{corollary}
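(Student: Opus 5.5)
The natural route is the one used for Corollaries \ref{C1} and \ref{C2}: view equations (\ref{OES91})--(\ref{OES92}) as a linear system $B\,\delta^2 u=\widehat t$ indexed by the pairs $(s,j)$ with $s\succ0\in S^\prime$ and $a_s<j\leq K$, $j\in A(s)$, and apply Lemma \ref{L2}.

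\textbf{Step 1: setting up the operator.} Order the index pairs consistently with $\succ$ on $s$ (ties in $s$ broken by $j$). Define $b_{(s,j),(s,j)}=\lambda+\theta(x_K+\mathbbm{1}_{j<K})+\mu$. The off-diagonal entries are:
\begin{itemize}
\item $-\lambda$ on $([s,\lambda],j)$;
\item $-\theta(x_K-\mathbbm{1}_{j=K})$ on $([s,\theta_K],j)$;
\item when $a_s>1$, $-\mu$ on $([[s,j],a_s],\,j-1)$.
\end{itemize}
The constant vector is $t=0$ when $a_s>1$ and $t=\widehat t_s$ when $a_s=1$. By Lemma \ref{L35} (which uses Corollaries \ref{C1} and \ref{C2}), $t\geq0$.

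I must also check that each neighbouring index is again admissible, i.e.\ that the system closes. Adding a $K$-phase arrival keeps $s\in S^\prime$ and does not change $a_s$ or $j=a_{[s,\theta_{a_s}]}$. Removing a $K$-customer also preserves both. The index $[[s,j],a_s]$ still has at most two customers with fewer than $K$ phases, carries phases $a_s-1<j-1$, and satisfies $j-1=a_{[\cdot,\theta_{a_s-1}]}$.

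Boundary cases remain. If $j=K$ and $x_K=1$, the coefficient of $\theta$ vanishes. If a term leaves the state space, it cancels by the convention stated before (\ref{OES4}), or the corresponding $\delta^2u$ is handled by the explicit one-customer case (where serving is optimal).

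\textbf{Step 2: conditions of Lemma \ref{L2}.} Condition (i) holds by sign. For (ii), the row sum is
\[
d_{(s,j)}=\theta(x_K+\mathbbm{1}_{j<K})-\theta(x_K-\mathbbm{1}_{j=K})+\mu\mathbbm{1}_{a_s=1}=\theta+\mu\mathbbm{1}_{a_s=1}\geq\theta>0,
\]
so $d\geq\theta$. For (iii), the only entries pointing to larger indices are the $\lambda$-transition (the state grows in $\omega$) and possibly the $\mu$-transition, whose state has the same $\omega$. Hence $\sum_{j'>i}|b_{ij'}|\leq\lambda+\mu=:\beta_0$.

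Finally, $\delta^2u\in\ell^\infty$. By Corollary \ref{BD}, each $\Delta u(\cdot;\cdot;\cdot)$ is bounded by a constant (the states involved differ in at most two coordinates by one). The added shifts $q^{i-1}(\tfrac1K-p_{i,j})$ take finitely many values. Lemma \ref{L2} then gives $\delta^2u(s;a_s;j)\geq0$.

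\textbf{Main obstacle.} The delicate part is verifying closure and the index bookkeeping. Specifically, one must check three things:
\begin{itemize}
\item for the $\mu$-successor the shifted constants match, i.e.\ $q^{a_s-1}(\tfrac1K-p_{a_s,j})$ versus $q^{a_s-2}(\tfrac1K-p_{a_s-1,j-1})$;
\item this mismatch either is absorbed into a nonnegative constant or vanishes;
\item the constants introduced by passing from (\ref{OES84}) to (\ref{OES91}) are consistent.
\end{itemize}
Since $p_{a_s,j}=p_{a_s-1,j-1}$, the shift changes by a factor $q$. Substituting it yields a constant term $-(\lambda+\theta(x_K+\mathbbm{1}_{j<K})+\mu)c+\lambda c+\theta(x_K-\mathbbm{1}_{j=K})c+\mu c/q$ with $c=q^{a_s-1}(\tfrac1K-p_{a_s,j})$. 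This equals $c(\mu/q-\mu-\theta)=0$ because $\mu/q=\mu+\theta$.

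This computation, together with the sign of $\tfrac1K-p_{a_s,j}$, is what I would verify first. If the sign of $\tfrac1K-p_{a_s,j}$ fails, the residual must be folded into $t$ and shown to be nonnegative, as was done for $t^\prime_s$ in Lemma \ref{L33}.
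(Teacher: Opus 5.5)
Your plan is the paper's proof: the same operator $\widehat B$, the same right-hand side $\widehat t_s\mathbbm{1}_{a_s=1}$, and Lemma \ref{L2}. Your checks are all correct and make explicit what the paper leaves implicit: conditions (i)--(iii) (row sums $\theta+\mu\mathbbm{1}_{a_s=1}$, $\beta_0=\lambda+\mu$), closure of the index set, boundedness, and the cancellation $c(\mu/q-\mu-\theta)=0$ for $a_s>1$. That cancellation also shows the $a_s>1$ rows carry no residual whatever the sign of $\frac1K-p_{a_s,j}$. So your closing worry is misplaced for those rows.

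It is not misplaced for the $a_s=1$ rows, and there it is a genuine gap: you, like the paper, take these rows from (\ref{OES92}) without redoing the bookkeeping. Substitute $\Delta^2u=\delta^2u-(\frac1K-p_{1,j})$ into (\ref{OES83}) and write $\mu\Delta u(\cdot;0;j-1)=\mu\,\delta u(\cdot;0;j-1)-\frac{\mu}{K}-\theta+\mu p_{0,j-1}$. Then use two facts: the diagonal coefficient minus the $\lambda$- and $\theta$-coefficients equals $\theta+\mu$, and $p_{0,j-1}=p_{1,j}$. The constant term comes out as $\widehat t_s+\theta\bigl(\frac1K-p_{1,j}\bigr)$, not $\widehat t_s$. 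The extra term equals $\frac{\theta}{K}\bigl(1-\frac{\theta}{\mu}(j-2)\bigr)$, which is negative when $j\ge3$ and $\theta(j-2)>\mu$. Corollaries \ref{C1} and \ref{C2} only give $\widehat t_s\ge0$, so they cannot compensate for it.

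In fact nothing can compensate for it. Take $K\ge3$ and $s=(1,0,\dots,0,1)\in S^*$, so that $a_s=1$ and $j=K$. Corollary \ref{BD} gives $|\Delta^2u(s;1;K)|\le4$ uniformly in $\theta$. Hence $\delta^2u(s;1;K)\le4+\frac1K-\frac{\theta}{\mu}\frac{K-2}{K}<0$ once $\theta/\mu>\frac{4K+1}{K-2}$, so the stated inequality is false there. Your argument and the paper's therefore prove the corollary only when the corrected right-hand side is nonnegative, for example under $\theta(K-2)\le\mu$. In that regime Lemma \ref{L2} applies verbatim. Beyond it, the $i=1$ shift in (\ref{D2}) has to be redefined.
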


Next corollary whose proof is also given in the Appendix states the same result for $\delta u$.
\begin{corollary}\label{C4}
    For $s\succ0\in S^*$, we have
    \[\delta u(s;a_s;K)\geq0.\]
\end{corollary}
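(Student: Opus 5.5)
The plan is to apply Lemma \ref{L2} to the linear system (\ref{OES90}) of Lemma \ref{L33}. The unknowns are $\big(\delta u(s;a_s;K)\big)_{s\succ0\in S^*,\,a_s<K,\,x_K>0}$. We move the transition terms to the left-hand side and define
\[b_{s,s}=\lambda+\theta(x_K+1)+\mu,\quad b_{s,[s,\lambda]}=-\lambda,\quad b_{s,[s,\theta_K]}=-\theta(x_K-1),\quad b_{s,[s,a_s]}=-\mu,\]
with all other entries zero. The right-hand side is $t_s:=t^\prime_s+\theta\,\delta^2u(s;a_s;K)$.

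First we check that the system closes on this index set. Under SREPT, $a_{[s,\lambda]}=a_s$ and $a_{[s,\theta_K]}=a_s$ when $x_K>1$, so the unknown at $[s,\lambda]$ is $\delta u([s,\lambda];a_s;K)$, and likewise at $[s,\theta_K]$. The unknown at $[s,a_s]$ is $\delta u([s,a_s];a_{[s,a_s]};K)$, which appears as written. Boundary cases occur when $x_K=1$ (the $\theta$-term vanishes) or when $[s,a_s]$ has $a_{[s,a_s]}=K$ or $0$ (i.e.\ $a_s=1$). There $\Delta u(\cdot;K;K)=0$ and $p_{K,K}$ is a harmless constant, or the state has a single customer. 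Such terms are known constants. I would check that they are nonnegative (e.g.\ $-q^{K}p_{K,K}\geq 0$ since $p_{K,K}=-\theta/(\mu K)<0$) and absorb them into $t_s$, keeping $t_s\ge0$.

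Next we verify conditions (i)--(iii) of Lemma \ref{L2}. The diagonal entries are positive and the off-diagonal entries nonpositive. The row sums give $d_s=\theta(x_K+1)-\theta(x_K-1)=2\theta$, so $d\geq 2\theta>0$ (or larger when boundary terms drop). For (iii), the entries to the right of the diagonal in the ordering $\succ$ are $[s,\lambda]$, with coefficient $\lambda$, and possibly $[s,a_s]$, with coefficient $\mu$. The state $[s,\theta_K]$ has fewer customers, so it lies earlier. Hence $\sum_{j>i}|b_{ij}|\le\lambda+\mu=:\beta_0$. Boundedness of the unknowns in $\ell^\infty$ follows from Corollary \ref{BD}, since $|\Delta u(s;a_s;K)|\le 2+1$ uniformly and the shifts $q^{a_s}p_{a_s,K}$ are bounded.

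It remains to show $t_s\ge0$, which is where the earlier results enter. By Lemma \ref{L33}, $t^\prime_s\ge0$. By Corollary \ref{C3} with $j=K$ (valid since $S^*\subset S^\prime$ and $a_s<K\in A(s)$), $\delta^2u(s;a_s;K)\ge0$. Lemma \ref{L2} then yields $\delta u(s;a_s;K)\ge0$.

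The main obstacle is the bookkeeping at the boundary of the index set: states where $a_{[s,a_s]}$ fails to be below $K$, or where $x_K=1$. These must not break closure of the system or the sign of $t_s$. If some boundary constant turned out negative, I would adjust by noting that $\Delta u$ there equals an already-controlled quantity (Corollary \ref{C1}) and fold it into $t_s$.
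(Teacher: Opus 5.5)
Your proposal matches the paper's proof. It uses the same matrix read off from (\ref{OES90}): diagonal $\lambda+\theta(x_K+1)+\mu$, off-diagonals $-\lambda,\,-\theta(x_K-1),\,-\mu$, and row sums $2\theta$. The right-hand side $t'_s+\theta\,\delta^2u(s;a_s;K)$ is made nonnegative by Lemma \ref{L33} and Corollary \ref{C3} with $j=K$, and Lemma \ref{L2} then gives the conclusion.

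Your boundary bookkeeping is more explicit than the paper's and is harmless. The one slip is the constant $|\Delta u(s;a_s;K)|\le 3$: Corollary \ref{BD} gives $\le 4$, because $[s,a_s]$ and $[s,K]$ can differ by $4$ in $\ell^1$. Any uniform bound suffices, so this does not affect the argument.
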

Finally, from the definition of $\delta u(s;a_s;K)$, we have the following corollary.
\begin{corollary}\label{C5}
    For $s\succ0\in S^*$, we have
    \[\Delta u(s;a_s;K)\geq0.\]
\end{corollary}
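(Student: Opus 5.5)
Corollary \ref{C5} follows directly from Corollary \ref{C4} and definition (\ref{D12}). For $s\succ0\in S^*$ with $a_s<K$ and $x_K>0$, we have $a_s\in A(s)$ and $K\in A(s)$. Rearranging (\ref{D12}) with $i=a_s$ gives
\[
\Delta u(s;a_s;K)=\delta u(s;a_s;K)+q^{a_s}p_{a_s,K}.
\]
The first term is nonnegative by Corollary \ref{C4}.

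For the second term, note that $q=\frac{\mu}{\mu+\theta}\in(0,1)$, so $q^{a_s}>0$. Also $p_{a_s,K}=\frac{\theta}{\mu}\frac{K-a_s-1}{K}$ is nonnegative because $a_s\leq K-1$; it vanishes exactly when $a_s=K-1$. Hence $q^{a_s}p_{a_s,K}\geq0$, and therefore $\Delta u(s;a_s;K)\geq0$.

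It remains to dispose of the degenerate cases excluded at the start of this subsection. If $a_s=K$, then $\Delta u(s;K;K)=0$ by definition. If $x_K=0$, then $s\in S^*$ has a single customer, so $K\notin A(s)$ unless $a_s=K$, and there is nothing to prove.

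Nothing here is delicate; the real work lives in Corollary \ref{C4} and, upstream of it, Corollaries \ref{C2}--\ref{C3}. The only point to check is that the shift $q^{a_s}p_{a_s,K}$ has the right sign, which uses $a_s\leq K-1$.

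Combining this with Corollary \ref{C1}, the SREPT action $a_s$ dominates both idling and serving a customer in phase $K$ for every $s\in S^*$. Since these are the only competing actions in $S^*$, SREPT attains the maximum in (\ref{OE2}) on the recurrent class, which is what optimality requires.
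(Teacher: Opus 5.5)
Your proposal is correct and is the paper's argument: Corollary \ref{C5} follows from Corollary \ref{C4} and definition (\ref{D12}), since $\Delta u(s;a_s;K)=\delta u(s;a_s;K)+q^{a_s}p_{a_s,K}$ and $q^{a_s}p_{a_s,K}\ge 0$ because $a_s\le K-1$. You also handle the degenerate cases $a_s=K$ and $x_K=0$ the same way the paper's standing ``without loss of generality'' assumption at the start of that subsection does.
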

Combining Corollaries \ref{C1} and \ref{C5}, we get the optimality of the SREPT policy as stated in the following theorem.
\begin{theorem}
    SREPT policy maximizes the long-run average throughput in queues with Poisson arrivals, Erlang service time distribution, and exponentially distributed customer impatience times.
\end{theorem}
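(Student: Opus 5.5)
The theorem follows by combining the verification framework of Theorem \ref{A2} with the sign results already established for the bias of the SREPT policy. The plan is first to confirm that the optimality machinery applies. By Theorem \ref{A2}, conditions (i)--(viii) hold with $\omega(s)=\max\{\sum_k x_k,1\}$, $c_0=-\theta$, $L_0=\lambda+\theta+\mu$; the check is deferred to the Appendix. A natural choice for condition (v) is $\omega'=\omega^2$, which works because the abandonment drift $-\theta\sum_k x_k$ dominates. Theorem \ref{A2} then gives a solution $(g^*,u)$ of (\ref{OE}) and says that a stationary deterministic policy is optimal if and only if it attains the supremum in (\ref{OE}). Since the reward rate is $r(s,a)=\mu\mathbbm{1}_{a=1}$, the long-run average reward equals the long-run average throughput. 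Maximizing (\ref{OE}) is therefore the right objective.

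Next we take $u$ to be the bias of SREPT, which satisfies the Poisson equation (\ref{OES1}) with gain $g$. The aim is to show that $(g,u)$ solves (\ref{OE2}), i.e. that $a_s$ attains $\max_{i\in A(s)}\{u([s,i])-u(s)+\mathbbm{1}_{i=1}\}$. Following Chapter 8.6 of \cite{Put}, this only needs to be checked on the recurrent class $S^*$ of SREPT. Off $S^*$, one can modify the policy on transient states without changing the gain, so SREPT stays gain-optimal. On $S^*$ there is at most one customer with fewer than $K$ remaining phases. Hence $A(s)\subseteq\{0,a_s,K\}$, and it suffices to compare $a_s$ against $0$ and against $K$.

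The comparison with $0$ is Corollary \ref{C1}, which gives $\Delta u(s;a_s;0)\ge 0$ for every $s\succ 0$. The comparison with $K$ is Corollary \ref{C5}, which gives $\Delta u(s;a_s;K)\ge 0$ on $S^*$. That corollary rests on the chain Corollary \ref{C2} $\Rightarrow$ Corollary \ref{C3} $\Rightarrow$ Corollary \ref{C4}. Each link applies Lemma \ref{L2} to a linear system with nonnegative forcing term; Lemmas \ref{L33}--\ref{L35} are what make the forcing terms $t'_s,\widetilde t_s,\widehat t_s$ nonnegative. Lemma \ref{L2} needs its unknowns in $\ell^\infty$, and Corollary \ref{BD} supplies this because every $\Delta u$, $\delta u$ and $\delta^2u$ differs from differences of $u$ at neighbouring states only by bounded constants. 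Substituting these inequalities shows that SREPT attains the maximum in (\ref{OE2}) in every state $s\succ0$ of $S^*$. At $s=0$ the only action is idling, so there is nothing to check. Theorem \ref{A2} then yields optimality among all admissible policies, with gain independent of the initial state and of the particular value of $\theta>0$.

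I expect the main obstacle to be the rigorous step from ``optimal on $S^*$'' to global optimality. One must make sure that the solution of (\ref{OE}) guaranteed by Theorem \ref{A2} coincides, up to a constant, with the SREPT bias restricted to $S^*$. I would handle this by defining a policy that agrees with SREPT on $S^*$ and uses the maximizing action elsewhere. This policy has the same gain because it has the same recurrent class. Uniqueness of the bias up to additive constants in Theorem \ref{A2} then identifies it with the SREPT bias.
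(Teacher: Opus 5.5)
Your main line is the paper's proof: Theorem~\ref{A2}, the reduction to the recurrent class $S^*$ of SREPT, then Corollaries~\ref{C1} and~\ref{C5} (via~\ref{C2}--\ref{C4} and Lemma~\ref{L2}). The paper supports that reduction only by citing Chapter 8.6 of \cite{Put}, so the step you flag is a weak point you share with it. Your repair in the last paragraph, however, does not close it, for three reasons.

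\textbf{Why the repair fails.} First, the uniqueness clause of Theorem~\ref{A2} concerns solutions of (\ref{OE}). Invoking it presupposes that the modified policy's bias $u'$ solves (\ref{OE}), which is exactly what must be shown. Second, $u'$ agrees with $u$ on the closed set $S^*$ but in general not off it. The comparison at $s\in S^*$ against action $K$ involves $u([s,K])$, and $[s,K]\in S'\setminus S^*$. So Corollary~\ref{C5}, a statement about the SREPT bias, says nothing about $u'$. Moreover, your modified policy is greedy for $u$, not for $u'$. Third, the principle fails in general, even for finite unichain models. Take states $1,2$. State $1$ offers ``stay, reward $0$'' or ``move to $2$, reward $-\epsilon$''. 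State $2$ offers ``return to $1$, reward $0$'' or ``earn $1$, then go to $1$ with probability $1/2$, else stay''. The policy (stay, return) has gain $0$ and bias $h\equiv0$, and it strictly attains the maximum at its only recurrent state. Your modification (stay, earn) keeps gain $0$. Yet (move, earn) has gain $(2-\epsilon)/3>0$.

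\textbf{What is actually required.} Let $u$ be the SREPT bias and let $\pi$ be stationary deterministic with invariant measure $\nu_\pi$. Integrability is harmless here, since the number in system is dominated by an $M/M/\infty$ queue. Then the gain-comparison identity gives
\[
g_\pi-g=-\mu\sum_{s\succ0}\nu_\pi(s)\,\Delta u\big(s;a_s;\pi(s)\big).
\]
Competing policies can make states outside $S^*$ recurrent. For example, starting a fresh customer in some $s\in S^*$ with $a_s<K$ and $x_K>0$ leads to $[s,K]$. At such states, a choice $j\notin\{0,a_s\}$ contributes a term whose sign is not controlled by Corollary~\ref{C1}, which holds on all of $S$ but only for $j=0$. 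Nor is it controlled by Corollary~\ref{C5}, which holds only on $S^*$. The missing ingredient is therefore $\Delta u(s;a_s;j)\ge 0$ for all $j\in A(s)$ at every state that some stationary policy can make recurrent. One possible route is to extend the Lemma~\ref{L2} arguments beyond $S^*$ and $S'$. Verifying the maximum on $S^*$ alone does not establish optimality.
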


\section{Hyperexponential Service Time} \label{S:hyper}

In this section, we consider a system where the service time distribution is hyperexponential with rates $\mu_1,\dots,\mu_K>0$, and probabilities $p_1,\dots,p_K$ where $\sum_{k=1}^Kp_k=1$. The arrival and abandonment processes are identical to those described in Section \ref{S:Erlang}.  Without loss of generality, assume that the rates of the hyperexponential distribution are ordered as $\mu_1>\mu_2>\cdots>\mu_K$. Although the optimality of the SREPT policy can again be established using the technique in Section \ref{S:Erlang}, we instead present a simpler argument based on the main result of Bhulai et al. \cite{sunjay}. The authors study the problem of optimal server scheduling in a $K$-class queue with abandonments, with the objective of minimizing the long-run average cost. Theorem \ref{T41} states their main result. 

\begin{theorem}\label{T41}
    \textit{[Bhulai et al.\cite{sunjay}]} Suppose that the customers can be ordered such that, for $1\leq i\leq j\leq K$, the following three conditions hold
\begin{eqnarray}
    1. & c_i\geq c_j,\nonumber\\
    2. & c_i\mu_i\geq c_j\mu_j,\nonumber\\
    3. & c_i\mu_i/\theta_i\geq c_j\mu_j/\theta_j,\nonumber
\end{eqnarray}
where $\theta_i$ is the abandonment rate and $c_i$ is the holding cost per unit time for customer type $i$, then the Smallest Index Policy is $\alpha$-discount optimal for any $\alpha>0$, and hence also strongly Blackwell optimal.
\end{theorem}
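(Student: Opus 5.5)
Since Theorem \ref{T41} is quoted from \cite{sunjay}, I would reprove it in the discounted setting via value iteration, which is the standard approach for such priority results. Fix $\alpha>0$. The state is $x=(x_1,\dots,x_K)\in\mathbb{N}^K$, the cost rate is $\sum_k c_kx_k$, and the server may serve any nonempty class (preemption allowed). Class-$i$ customers depart at rate $\mu_i$ when served and abandon at rate $x_i\theta_i$. The abandonment rates are unbounded, so I would not uniformize directly. Instead I would first work with a truncated model in which each $x_i\le N$ (arrivals blocked at the boundary), or use smoothed rate truncation so that the structural properties are not destroyed at the boundary. I would then uniformize with constant $\gamma=\lambda+\sum_k\mu_k+N\sum_k\theta_k$ and write the dynamic programming operator
\[
TV(x)=\frac{1}{\alpha+\gamma}\Big(\sum_k c_kx_k+\lambda_{\cdot}V(x+e_{\cdot})+\sum_k x_k\theta_k V(x-e_k)+\min_{i:x_i>0}\{\mu_iV(x-e_i)+(\gamma-\dots)V(x)\}\Big),
\]
where the elided terms are the dummy transitions.

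The core step is to identify a set of properties of $V$, preserved by $T$, that makes the smallest-index action minimize the bracket. With $D_i(x):=V(x)-V(x-e_i)$ for $x_i>0$, I would propagate three properties:
\begin{itemize}
\item[(a)] monotonicity: $D_i(x)\ge0$;
\item[(b)] priority: $\mu_iD_i(x)\ge\mu_jD_j(x)$ for $i<j$, which is exactly what makes serving class $i$ optimal;
\item[(c)] an abandonment-compatible bound, e.g.\ $\theta_jD_i(x)-\theta_iD_j(x)\ge$ a suitable quantity, or simply $D_i\le c_i/\theta_i$, which is needed to control the state-dependent abandonment terms.
\end{itemize}
The proof is by induction on $n$ for $V_n=T^nV_0$ with $V_0\equiv0$. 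For each property I would expand $D_i(TV)$ term by term:
\begin{itemize}
\item the cost term contributes $c_i$, and condition 2 ($c_i\mu_i\ge c_j\mu_j$) gives (b) for it;
\item the arrival and dummy terms inherit (a)--(c) directly from $V_n$;
\item the abandonment terms produce $x_k\theta_k$-weighted differences plus an extra $\theta_iV(x-e_i)$-type term from the changed coefficient;
\item the min term is handled by case analysis on which action is optimal at $x$ versus $x-e_i$, which by induction is the smallest-index nonempty class in both.
\end{itemize}

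The main obstacle is the abandonment term: its rate depends on the state, so the difference $D_i(TV)$ picks up a stray term of size $\theta_iD_i$ (resp.\ $\theta_jD_j$). The comparison $\mu_iD_i\ge\mu_jD_j$ then fails unless $\theta$-scaled differences are also controlled. This is where conditions 1 and 3 are used. My first attempt would be to strengthen (b) to hold jointly with $\frac{\mu_i}{\theta_i}D_i\ge\frac{\mu_j}{\theta_j}D_j$, using $c_i\mu_i/\theta_i\ge c_j\mu_j/\theta_j$ for the cost contribution, together with $c_i\ge c_j$ and the bound $D_i\le c_i/(\alpha+\theta_i)$. I would then verify that the convex combinations arising in $T$ preserve all of these inequalities simultaneously. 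If that direct induction becomes too messy, the fallback is a coupling/interchange argument: compare a policy that serves $j$ at some instant with one that serves $i$, couple the exponential clocks, and use conditions 1--3 to show that the expected discounted cost difference is nonnegative.

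Finally, I would let $n\to\infty$ to obtain the properties for the truncated $V_\alpha$, and then let $N\to\infty$, using pointwise convergence of the truncated value functions (justified by the Lyapunov bound $\omega$ of Theorem \ref{A1}). This gives $\alpha$-discount optimality of the Smallest Index Policy. Since the same stationary policy is $\alpha$-optimal for every $\alpha>0$, it is in particular optimal for all $\alpha$ near $0$, hence Blackwell (strongly Blackwell) optimal by definition.
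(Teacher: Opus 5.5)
\textbf{Verdict: there is a genuine gap.} The paper gives no proof of Theorem \ref{T41}; it imports it from \cite{sunjay}, so your proposal has to stand on its own. Your outer shell is reasonable: truncate, run value iteration, let $N\to\infty$, and deduce Blackwell optimality from $\alpha$-optimality for every $\alpha$. But the step that carries the whole content of the theorem is left open: you never specify a family of inequalities that is actually closed under $T$. The one concrete candidate you give is false.

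Expand $\mu_iD_i(TV)(x)-\mu_jD_j(TV)(x)$ for $x_i,x_j>0$. The cost term contributes $c_i\mu_i-c_j\mu_j\ge 0$. The control term can be written as a nonnegative combination of differences of type (b) at neighbouring states, including the case $x_i=1$. The abandonment term is the problem. Its total rate differs at $x$, $x-e_i$ and $x-e_j$, so after matching it leaves the cross term
\[
\theta_j\mu_i\bigl[V(y+e_i)-V(y)\bigr]-\theta_i\mu_j\bigl[V(y+e_j)-V(y)\bigr],\qquad y=x-e_i-e_j ,
\]
and this term can be negative.

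\textbf{A concrete check.} Take $K=2$, $c_1=c_2=1$, $\mu_1=\theta_1=2$, $\mu_2=\theta_2=1$, no arrivals, $\alpha\downarrow 0$; conditions 1--3 all hold.
\begin{itemize}
\item One computes $V(e_1)=\tfrac14$, $V(e_2)=\tfrac12$ and $V(e_1+e_2)=\tfrac{17}{20}$.
\item The cross term at $y=0$ is therefore $-\tfrac12$. It is absorbed only by the cost term $c_1\mu_1-c_2\mu_2=1$.
\item Moreover $D_1(e_1+e_2)=0.35$ and $D_2(e_1+e_2)=0.6$, so $\tfrac{\mu_1}{\theta_1}D_1=0.35<0.6=\tfrac{\mu_2}{\theta_2}D_2$.
\end{itemize}
So the strengthening you plan to propagate fails for the value function itself, and by continuity also for small $\alpha>0$. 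It therefore cannot be a value-iteration invariant.

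Your other ingredients, $D_i\ge 0$ and $D_i\le c_i/(\alpha+\theta_i)$, only bound the cross term below by $-\theta_i\mu_jc_j/(\alpha+\theta_j)$. That cannot be absorbed when $c_i\mu_i-c_j\mu_j$ is small or zero.

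\textbf{What is missing.} You need an inequality that controls exactly this cross term; this is where conditions 1 and 3 must enter. You then need a proof that the enlarged family is preserved by every component of $T$, including the truncated arrivals. Hard blocking breaks the inequalities at $x_i=N$. Under smoothed rate truncation the arrival rates are state dependent and create their own unmatched terms. So the arrival terms do not ``inherit the properties directly'' as you claim.

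The coupling/interchange fallback is only named, not carried out. Note also that the paper's introduction observes that such interchange arguments no longer appear to work once abandonments are present.
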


\begin{theorem}
    SREPT policy maximizes the long-run average throughput in queues with Poisson arrivals, hyperexponential service time distribution, and exponentially distributed customer impatience times.
\end{theorem}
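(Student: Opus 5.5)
The plan is to reduce the hyperexponential problem to the $K$-class competing-queues model of Bhulai et al.\ and then apply Theorem \ref{T41}. Since the phase of each customer is observable, a customer whose service time was drawn from the $k$-th exponential component is, from the moment of arrival, a class-$k$ customer. The state is then $(x_1,\dots,x_K)$, where $x_k$ is the number of class-$k$ customers present. By the Poisson thinning property, class-$k$ customers arrive according to independent Poisson processes with rates $\lambda p_k$. Each class-$k$ customer requires an exponential($\mu_k$) amount of service, and by memorylessness preemption loses nothing. Every customer abandons at rate $\theta_k=\theta$. This is exactly the preemptive $K$-class model with abandonments in \cite{sunjay}. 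Moreover, with $\mu_1>\cdots>\mu_K$, the remaining expected processing time of a class-$k$ customer is always $1/\mu_k$, so SREPT is precisely the Smallest Index Policy, which serves the present class with the smallest index.

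Next, translate throughput maximization into a holding-cost minimization. The long-run throughput equals $\lambda$ minus the long-run abandonment rate. Because every customer in the system (including the one in service) abandons at rate $\theta$, the abandonment rate under a policy is $\theta\,\mathbb{E}[\sum_k X_k]$ in steady state. Hence maximizing throughput is equivalent to minimizing the long-run average cost with holding cost $c_k=1$ for all classes. Equivalently, one can charge the cost rate $\theta\sum_k x_k$, or reward completions at rate $\mu_k$ when class $k$ is served; the Poisson-equation form in (\ref{OE}) makes clear that these criteria differ by an additive constant for stationary policies. To be careful, I would verify this identity via Dynkin's formula on $\omega(s)=\sum_k x_k$, using the regularity guaranteed by Theorem \ref{A1}. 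Dynkin's formula gives $\lambda-\text{throughput}-\theta\,\overline{\mathbb{E}}[\sum X_k]=0$ in the long run.

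Then check the three conditions of Theorem \ref{T41} with $c_i\equiv 1$ and $\theta_i\equiv\theta$:
\begin{itemize}
\item[1.] $c_i\ge c_j$ holds trivially.
\item[2.] $c_i\mu_i\ge c_j\mu_j$ holds because $\mu_i\ge\mu_j$ for $i\le j$.
\item[3.] $c_i\mu_i/\theta_i\ge c_j\mu_j/\theta_j$ holds for the same reason.
\end{itemize}
Theorem \ref{T41} then yields that the Smallest Index Policy is $\alpha$-discount optimal for all $\alpha$ and strongly Blackwell optimal, hence average-cost optimal.

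The main obstacle is the passage from Blackwell optimality to long-run average optimality over all non-anticipating policies in this non-uniformizable, unbounded-rate setting. I would handle it by appealing to Theorem \ref{A2}, whose conditions hold here with the same $\omega$ as in Section \ref{S:Erlang}. This gives existence of an average-optimal stationary deterministic policy and a constant optimal gain. Standard vanishing-discount arguments (as in \cite{CTMDP}) then show that a policy that is $\alpha$-discount optimal for all small $\alpha$ is average optimal. Idling is never beneficial, since it is dominated in Bhulai et al.'s setting, so the Smallest Index Policy, i.e.\ SREPT, maximizes throughput.
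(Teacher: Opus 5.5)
Your proposal is correct and follows the same route as the paper. Both arguments reduce to the $K$-competing-queues model of Bhulai et al.\ with equal holding costs (the paper uses $c_i=r\theta$, you use $c_i=1$, which differs only by scaling) and $\theta_i=\theta$, so that the conditions of Theorem~\ref{T41} reduce to $\mu_i\ge\mu_j$ for $i\le j$ and the Smallest Index Policy coincides with SREPT. Your extra steps fill in details the paper leaves implicit without changing the argument: the Dynkin identity relating throughput to $\lambda-\theta\,\mathbb{E}[\sum_k X_k]$, keeping the rates $\mu_i$ distinct (the paper's text writes $\mu_i=\mu$, evidently a slip), and the explicit passage from Blackwell optimality to average optimality.
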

\begin{proof}
When the service time distribution is hyperexponential and the rates of the customers in the system are observable, our problem is equivalent to assigning a server to $K$-competing customer classes. Furthermore, in our model, maximizing the long-run average throughput is the same as maximizing the long-run average gain of a system where customers pay a reward $r$ at the time of arrivals and incur a cost $r$ at the time of abandonment. Note that server scheduling policies do not control the arrivals. As a result, finding the optimal policy that maximizes the long-run average throughput in our model is equivalent to finding the optimal policy that minimizes the long-run average abandonment costs in the model where customers only incur a cost $r$ at the time of their abandonments. Let $c_i=r\theta$ for all $i=1,\dots,K$ be the equivalent abandonment cost per unit time. Then, our model is equivalent to the model in Bhulai et al. \cite{sunjay} with $c_i=r\theta$, $\mu_i=\mu$, $\theta_i=\theta$ for all $i=1,\dots,K$, which satisfies the three conditions in Theorem \ref{T41}. Therefore, we know that the Smallest Index Policy, which is  the SREPT policy in our model, maximizes the long-run average throughput in queues with Poisson arrivals, hyperexponential service time distribution, and exponentially distributed customer impatience times.
\end{proof}

\section{Concluding Remarks}
This paper studies server scheduling in a Markovian queueing system with customer abandonments. We show that the SREPT policy maximizes the long-run average throughput of the system when the service times follow Erlang or hyperexponential distributions and the phase is observable. Future research will focus on finding optimal server assignment policies in $M/G/1$ queues with customer abandonments and more general service time distributions. In particular, our objective is to understand whether throughput-optimal policies can be determined independent of the (homogeneous) abandonment rate.
\section{Acknowledgments}
The work of the second author was supported by the National Science Foundation Grant CMMI-2127778. The work of the third author was supported by the Discovery Grant program of the Natural Sciences and Engineering Research Council of Canada.

\section*{Appendix}
\textbf{Proof of Theorem \ref{A2}}

\begin{proof}
    Since we set $c_0=-\theta<0$ and $L_0=\lambda+\theta+\mu>0$, condition (i) is satisfied.
    
    Note that the reward function in our model is either $\mu$ or $0$. As a result, condition (ii) is satisfied with $M=\mu$.
    
    Since $A(s)$ is a discrete set for each $s\in S$, conditions (iii) and (iv) are satisfied trivially.

    Next, for any state $s=(x_1,x_2,\dots,x_K)\in S$, we choose
    \[\omega^\prime(s):=\omega(s)^2=\max\left\{\left(\sum_{i=1}^K x_i\right)^2,1\right\}.\]
    Then, with $M^\prime=\lambda+\theta+\mu$ and $c^\prime=b^\prime=2\lambda >0$  condition (v) is satisfied.

    Finally, due to customer abandonments, the continuous time Markov chain has a unique invariant probability and is stable under any stationary deterministic policy $\pi\in \mathbb{F}$. Therefore, conditions (vi), (vii) and (viii) are satisfied.
\end{proof}

\textbf{Proof of Corollary \ref{C2}}
\begin{proof}
    In equation (\ref{OES6}), for all $s,s^\prime\in S^*$, define $\widetilde{b}_{s,s^\prime}$ as \[\widetilde{b}_{s,s}=\lambda+\theta (x_K+1)+\mu,\ \widetilde{b}_{s,[s,\lambda]}=-\lambda,\ \widetilde{b}_{s,[s,\theta_K]}=-\theta x_K,\]
    \[\widetilde{b}_{s,[s,a_s]}=-\mu\text{ and }\widetilde{b}_{s,s^\prime}=0\text{ otherwise}.\]
    Then, we have
    \begin{eqnarray}
    \widetilde{b}_{s,s}\delta u(s;0;a_s)+\widetilde{b}_{s,[s,\lambda]} \delta u([s,\lambda];0;a_{s})&&\nonumber\\
    +\widetilde{b}_{s,[s,\theta_K]} \delta u([s,\theta_K];0;a_s)+\widetilde{b}_{s,[s,a_s]}\delta u([s,a_s];0;a_{[s,a_s]})&=&\widetilde{t_s}.\nonumber
\end{eqnarray}
Since $a_s=a_{[s,\lambda]}=a_{[s,\theta_K]}$, we obtain a system of linear equations for the vector $\left(\delta u(s;0;a_s)\right)_{s\succ 0\in S^*}$, satisfying
\[\widetilde{B}\delta u=\widetilde{t}\]
where $\widetilde{B}=(\widetilde{b}_{s,s^\prime})_{s,s^\prime\succ0\in S^*}$ and $\widetilde{t}=(\widetilde{t_s})_{s\succ0\in S^*}$ (defined in Lemma \ref{L34}). Note that $\widetilde{B}$ satisfies conditions (i) - (iii) of Lemma \ref{L2}, $\delta u\in \ell^\infty$ since $\Delta u\in\ell^\infty$, and $\widetilde{t}$ is a non-negative vector. Therefore, from Lemma \ref{L2}, we know that $\delta u(s;0;a_s)\geq0$ for all $s\succ0\in S^*$.
\end{proof}

\textbf{Proof of Corollary \ref{C3}}
\begin{proof}
    In equations (\ref{OES91}) and (\ref{OES92}), for all $s,s^\prime\in S^\prime$, define $\widehat{b}_{s,s^\prime}$ as \[\widehat{b}_{s,s}=\lambda+\theta (x_K+\mathbbm{1}_{j<K})+\mu,\ \widehat{b}_{s,[s,\lambda]}=-\lambda,\ \widehat{b}_{s,[s,\theta_K]}=-\theta (x_K-\mathbbm{1}_{j=K}),\]
    \[\widehat{b}_{s,[s,j,a_s]}=-\mu\mathbbm{1}_{a_s>1}\text{ and }\widehat{b}_{s,s^\prime}=0\text{ otherwise}.\]
    Then, we have
    \begin{eqnarray}
    \widehat{b}_{s,s}\delta^2 u(s;a_s;j)+\widehat{b}_{s,[s,\lambda]} \delta^2 u([s,\lambda];a_s;j)&&\nonumber\\
    +\widehat{b}_{s,[s,\theta_K]} \delta^2 u([s,\theta_K];a_s;j)+\widehat{b}_{s,[s,j,a_s]}\delta^2 u([s,j,a_s];a_s-1;j-1)&=&\widehat{t_s}\mathbbm{1}_{a_s=1}.\nonumber
\end{eqnarray}
Since $a_s=a_{[s,\lambda]}=a_{[s,\theta_K]}$, we obtain a system of linear equations for the vector $\left(\delta^2 u(s;a_s;j)\right)_{s\succ 0\in S^\prime}$, satisfying
\[\widehat{B}\delta^2 u=\widehat{\tau}\]
where $\widehat{B}=(\widehat{b}_{s,s^\prime})_{s,s^\prime\succ0\in S^\prime}$ and $\widehat{\tau}=(\widehat{t_s}\mathbbm{1}_{a_s=1})_{s\succ0\in S^\prime}$ (defined in Lemma \ref{L35}). Note that $\widehat{B}$ satisfies conditions (i) - (iii) of Lemma \ref{L2}, $\delta^2 u\in \ell^\infty$ since $\Delta^2 u\in\ell^\infty$, and $\widehat{t}$ is a non-negative vector. Therefore, from Lemma \ref{L2}, we know that $\delta^2 u(s;a_s;j)\geq0$ for all $s\succ0\in S^\prime$.
\end{proof}

\textbf{Proof of Corollary \ref{C4}}
\begin{proof}
    In equation (\ref{OES90}), for all $s,s^\prime\in S^*$, define ${b}_{s,s^\prime}^\prime$ as \[{b}_{s,s}^\prime=\lambda+\theta (x_K+1)+\mu,\ {b}_{s,[s,\lambda]}^\prime=-\lambda,\ {b}_{s,[s,\theta_K]}^\prime=-\theta (x_K-1),\]
    \[{b}_{s,[s,a_s]}^\prime=-\mu\text{ and }{b}_{s,s^\prime}^\prime=0\text{ otherwise}.\]
    Then, we have
    \begin{eqnarray}
    {b}_{s,s}^\prime\delta u(s;a_s;K)+{b}_{s,[s,\lambda]}^\prime \delta u([s,\lambda];a_s;K)&&\nonumber\\
    +{b}_{s,[s,\theta_K]}^\prime \delta u([s,\theta_K];a_s;K)+{b}_{s,[s,a_s]}^\prime\delta u([s,a_s];a_{[s,a_s]};K)&=&t_s^\prime+\theta\delta^2u(s;a_s;K).\nonumber
\end{eqnarray}
Since $a_s=a_{[s,\lambda]}=a_{[s,\theta_K]}$, we obtain a system of linear equations for the vector $\left(\delta u(s;a_s;K)\right)_{s\succ 0\in S^*}$, satisfying
\[{B}^\prime\delta u={\tau}^\prime\]
where ${B}^\prime=({b}_{s,s^\prime}^\prime)_{s,s^\prime\succ0\in S^*}$ and ${\tau}^\prime=(t_s^\prime+\theta\delta^2u(s;a_s;K))_{s\succ0\in S^*}$ (defined in Lemma \ref{L33}). Note that $B^\prime$ satisfies conditions (i) - (iii) of Lemma \ref{L2}, $\delta u\in \ell^\infty$ since $\Delta u\in\ell^\infty$, and $t^\prime$ a non-negative vector. Therefore, from Lemma \ref{L2}, we know that $\delta u(s;a_s;K)\geq0$ for all $s\succ0\in S^*$.
\end{proof}
\end{document}